\theoremstyle{plain}
\newtheorem{theorem}{Theorem}[section]
\newtheorem{proposition}[theorem]{Proposition}
\newtheorem{lemma}[theorem]{Lemma}
\newtheorem{corollary}[theorem]{Corollary}
\theoremstyle{definition}
\newtheorem{definition}[theorem]{Definition}
\newtheorem{rmk}[theorem]{Remark}
\setlist[itemize]{topsep=3mm, itemsep=1.8mm, parsep=1.2mm}
\newlist{conditions}{itemize}{1}
\setlist[conditions]{leftmargin=26mm, rightmargin=12mm, topsep=3mm, itemsep=1.8mm, parsep=1.2mm}
\newcommand{\cond}[1]{\item[(\textsc{#1})]\protected@edef\@currentlabel{\textsc{#1}}}
\newcommand{\condconst}[2]{\item[($\text{\textsc{#1}} \mid #2$)]\protected@edef\@currentlabel{$\text{\textsc{#1}} \mid #2$}}
\newcommand{\maps}{\colon}
\newcommand{\N}{\mathbb{N}}
\newcommand{\R}{\mathbb{R}}
\newcommand{\C}{\mathbb{C}}
\let\Re\relax
\DeclareMathOperator{\Re}{Re}
\newcommand{\laplace}{\mathcal{L}}
\newcommand{\fracderiv}[3]{\partial^{#1}_{#2, #3}}
\newcommand{\cont}{\mathcal{C}}
\newcommand{\holo}{\mathcal{H}}
\newcommand{\singexp}[2]{\mathcal{H}L^\infty_{#1, #2}}
\newcommand{\singexpalg}[1]{\singexp{#1}{\bullet}}
\newcommand{\genvolterra}{\mathcal{W}}
\newcommand{\genker}{w}
\newcommand{\volterra}{\mathcal{V}}
\newcommand{\hardpart}{\mathcal{V}_0}
\newcommand{\softpart}{\mathcal{V}_\star}
\newcommand{\kerwhole}{k}
\newcommand{\hardker}{k_0}
\newcommand{\softker}{k_\star}
\newcommand{\solwhole}{f}
\newcommand{\solproto}{f_0}
\newcommand{\solptb}{f_\star}
\newcommand{\roots}{\mathfrak{B}}
\newcommand{\domain}{\Omega}
\newcommand{\near}{\Omega_\text{near}}
\newcommand{\far}{\Omega_\text{far}}
\newif\ifshowverify
\newenvironment{verify}{\color{veriforest}}{\color{black}}
\newif\ifhighlightrevs
\newenvironment{revtwo}{\color{revred}}{\color{black}}
\newcommand{\revtwotext}[1]{\textcolor{revred}{#1}}
\newenvironment{revtwo}{}{}
\newcommand{\revtwotext}[1]{#1}
\definecolor{ietocean}{RGB}{0, 30, 140}
\definecolor{ietcoast}{RGB}{0, 150, 173}
\definecolor{ietlagoon}{RGB}{0, 216, 180}
\definecolor{veriforest}{RGB}{0, 80, 40}
\definecolor{revred}{RGB}{153, 0, 0}
\title{Regular singular Volterra equations on complex domains}
\author{Veronica Fantini and Aaron Fenyes}
\date{}
\begin{document}
\maketitle

\begin{abstract}
The inverse Laplace transform can turn a linear differential equation on a complex domain into an equivalent Volterra integral equation on a real domain. This can make things simpler: for example, a differential equation with irregular singularities can become a Volterra equation with regular singularities. It can also reveal hidden structure, especially when the Volterra equation extends to a complex domain. Our main result is to show that for a certain kind of regular singular Volterra equation on a complex domain, there is always a unique solution of a certain form. As a motivating example, this kind of Volterra equation arises when using Laplace transform methods to solve a {\em level~1} differential equation.
\end{abstract}
\tableofcontents
\section{Introduction}\label{sec:intro}
\subsection{Motivation}\label{sec:motivation}
An ordinary differential equation with an irregular singularity at infinity comes with a special family of formal solutions: ``normal series'' characterized by their formal asymptotics at infinity~\cite{int-irreg}. These series are typically divergent, but they can be ``resummed'' to produce analytic solutions~\cite{loday1994stokes,diverg-resurg--ii,loday-Remy2011,malgrange1995sommation,ramis1991series}. The results of this paper are motivated by the desire to find the same analytic solutions more directly, without the use of formal series. We do this in Section~\ref{sec:example} for a certain class of level~$1$ ODEs, using new Laplace transform methods tailored to the special properties of the analytic solutions that we seek. In~\cite{borel_reg}, we show that these solutions match the ones obtained through resummation. Since the Laplace transform methods we use may be of wider interest, we describe them here in a general and self-contained way.

\begin{revtwo}
In its most basic form, the Laplace transform $\laplace$ turns exponential-type functions of a real ``position'' variable $\zeta$ into holomorphic functions of a complex ``frequency'' variable $z$. It is the integral transform defined by the equation
\[ \laplace \varphi := \int_{\Gamma_\zeta} e^{-z\zeta} \varphi\;d\zeta\,, \]
where $\Gamma_\zeta$ is the ray $\zeta \in (0, \infty)$.     
\end{revtwo}
\begin{verify}[Suppose the magnitude of the Laplace transform integrand is Riemann-integrable on $(0, \infty)$. Write $z$ in terms of its real and imaginary parts as $x + iy$. Since the integrand is holomorphic with respect to $z$, the operator $\frac{\partial}{\partial\overline{z}} = \frac{1}{2}\left(\frac{\partial}{\partial x} + i\frac{\partial}{\partial x}\right)$ annihilates it. Using the comparison version of Leibniz's rule for improper integrals (Theorem~11.10 of A First Course in Real Analysis, by Protter and Morrey) with $\frac{\partial}{\partial x}$ and $\frac{\partial}{\partial y}$, we see that $\frac{\partial}{\partial\overline{z}}$ annihilates the whole integral.]\end{verify} Through identities like
\begin{align*}
\frac{\partial}{\partial z} \laplace \varphi & = \laplace(-\zeta\varphi) \\
\laplace \revtwotext{\psi}\;\laplace\varphi & = \laplace(\revtwotext{\psi} * \varphi) \\
z^{-\nu} \laplace \varphi & = \laplace\,\partial^{-\nu} \varphi\,,
\end{align*}
where $\partial^{-\nu}$ is the Riemann--Liouville fractional integral of order $\nu \in (0, \infty)$, the Laplace transform pulls differential operators on the frequency domain back to Volterra integral operators on the position domain. The favorable regularity properties and comprehensive theory of Volterra equations can thus be brought to bear on linear differential equations.

Some differential equations pull back to Volterra equations with real-analytic kernels, which extend to holomorphic Volterra equations on complex extensions of the position domain. For instance, in Section~\ref{sec:example}, we consider equations of the form
\begin{equation}\label{eqn:intro-level-1}
\left[ P\big(\tfrac{\partial}{\partial z}\big) + \frac{1}{z} Q\big(\tfrac{\partial}{\partial z}\big) + \frac{1}{z^2} R(z^{-1}) \right] \Phi = 0
\end{equation}
given by polynomials $P$, $Q$ and a holomorphic function $R(z^{-1})$ satisfying some extra conditions. A function of the form $\Phi = \laplace \varphi$ satisfies this equation if and only if $\varphi$ satisfies the integral equation
\begin{equation}\label{eqn:intro-use-dict}
\big[ P(-\zeta)+\partial^{-1}\circ Q(-\zeta)+\partial^{-2}\circ R(\partial^{-1}) \big] \varphi = 0\,.
\end{equation}
The integral operator remains well-defined if we make $\zeta$ a complex coordinate and seek solutions $\varphi$ which extend holomorphically into the complex position domain.

Equation~\eqref{eqn:intro-level-1} can be solved by resumming normal series, as described earlier. Each resummed solution is characterized by its $e^{-\alpha z} z^{-\tau_\alpha}$ asymptotics, where $-\alpha$ is a root of $P$ and $\tau_\alpha = Q(-\alpha)/P'(-\alpha)$ is real and positive. To obtain this solution analytically through Laplace transform methods, we must take the Laplace transform $\laplace_{\zeta,\alpha}$ that uses $\zeta = \alpha$ in place of the traditional $\zeta = 0$ as the integration base point. \begin{revtwo}
Explicitly,
\[ \laplace_{\zeta,\alpha} \varphi := \int_{\Gamma_{\zeta,\alpha}} e^{-z\zeta} \varphi\;d\zeta\,, \]
where $\Gamma_{\zeta,\alpha}$ is the ray $\zeta \in \alpha + (0, \infty)$.    
This version of the Laplace transform is defined in Section~\ref{sec:definition_Laplace}. A function of the form $\Phi = \laplace_{\zeta, \alpha} \varphi$ satisfies equation~\eqref{eqn:intro-level-1} if and only if $\varphi$ satisfies the integral equation
\[ \big[ P(-\zeta)+\partial_{\zeta,\alpha}^{-1}\circ Q(-\zeta)+\partial_{\zeta,\alpha}^{-2}\circ R(\partial_{\zeta,\alpha}^{-1}) \big] \varphi = 0\,, \]
where $\partial^{-1}_{\zeta, \alpha}$ is integration with base point $\zeta = \alpha$:
\[ [\partial^{-1}_{\zeta, \alpha} \varphi](a) := \int_{\zeta = \alpha}^a \varphi\;d\zeta\,. \]
We seek a solution $\psi_\alpha$ which
\begin{itemize}
\item has an $O\big((\zeta - \alpha)^{\tau_\alpha-1}\big)$ singularity at $\zeta = \alpha$; and
\item is of exponential type, meaning that it is $O\big(e^{\Lambda|\zeta|}\big)$ for some $\Lambda \in \R$ as $\zeta$ grows.
\end{itemize}
These conditions ensure that $\psi_\alpha$ has a well-defined Laplace transform, and the first condition gives $\laplace_{\zeta, \alpha} \psi_\alpha$ the desired asymptotics.
\end{revtwo}  

The theory of Volterra equations is well-suited to this solution method. By embodying our conditions on $\psi_\alpha$ in a Banach space, defined in Section~\ref{fn-spaces}, we can solve equations like~\eqref{eqn:intro-use-dict} using the contraction mapping theorem, taking advantage of the regularizing effect of the relevant Volterra operators. This establishes the existence and uniqueness of $\psi_\alpha$, as stated in Theorem~\ref{thm:example}. Since integral operators on the position domain correspond directly to multiplication operators on the frequency domain, with no need to worry about initial values, we know immediately that $\laplace_{\zeta, \alpha} \psi_\alpha$ satisfies equation~\eqref{eqn:intro-level-1}, even when $\psi_\alpha$ blows up at $\zeta = \alpha$. By isolating the features of equation~\eqref{eqn:intro-use-dict} that make our contraction mapping argument possible, we get the main result of this paper: the general existence and uniqueness result stated in Section~\ref{sec:results}.

Our results build on known methods for solving differential equations with irregular singularities. Our focus on integral equations in the position domain, and the singularities of their solutions, recalls the approach of Loday-Richaud and Remy, who use a mix of formal and analytic methods inspired by \'{E}calle's theory of resurgence~\cite{loday-Remy2011, EcalleIII}. One of our motivations is to understand resurgence from an analytic perspective, without reference to formal solutions. The Banach spaces we use to achieve this also appear in the work of Braaksma, who uses them to solve systems of ODEs whose coefficients are expressed as Laplace transforms~\cite{braaksma2006laplace}. This overlap suggests an opportunity to combine the two approaches, extending our results to systems of equations, and Braaksma's to equations with more general coefficients.
\subsection{Formalism for coordinates}\label{sec:formalism}
\subsubsection{Rationale}
Our motivating example, equation~\eqref{eqn:intro-use-dict}, has regular singularities at various points $\zeta = \alpha$ on the position domain. Each associated solution $\psi_\alpha$ is most easily expressed in terms of its own translated position coordinate $\zeta_\alpha = \zeta - \alpha$. We therefore expect our results to be used in calculations involving multiple coordinates. Our formalism for points, functions, and coordinates is tailored to such calculations.

For simplicity, we use $\C$ as the position domain throughout this article. In the study of resurgence, however, the position domain is often a Riemann surface over $\C$, or even a more general ``translation surface.'' Thanks to our choice of formalism, our results and their proofs generalize immediately to this setting.
\subsubsection{Points and coordinates}
From now on, we will carefully respect the distinction between points and coordinates. A point is a location in space, while a coordinate is a function on space. For example, the position coordinates $\zeta$ and $\zeta_\alpha$ are functions on the position domain, just like the integral equation solutions $\psi_\alpha$. A coordinate, like $\zeta$, can be evaluated at a point $b$, yielding a number $\beta := \zeta(b)$. Conversely, the point $b$ can be described as the place where $\zeta = \beta$.
\subsubsection{Integration}
When we integrate, the integrand will always be either a 1-form, like $\varphi\,d\zeta$, or a density, like $|\varphi\,d\zeta|$~\cite[Section~1.8]{local-viewpoint}. On a simply connected domain, all integration paths are equivalent, so we will typically specify an integral by its start and end points. These points may be given directly, as in the expression $\int_b^a \varphi\,d\zeta$, or described in terms of coordinates, as in the equivalent expression $\int_{\zeta = \beta}^a \varphi\,d\zeta$. In the integral
\[ \int_{\zeta = \beta}^a \big(\zeta(a) - \zeta\big)^{-1/2} \varphi\,d\zeta\,, \]
notice that $\zeta(a)$ is a number, which stays constant throughout the integration, while $\zeta$ is a function, whose value changes along the integration path.

\begin{revtwo}
Sometimes, in expressions like
\[ \int_{\zeta(a') = \beta}^a w(a, a')\,\varphi(a')\;d\zeta(a')\,, \]
it is useful to give a name $a'$ to the point moving along the integration path. In this case, all of the functions and differential forms whose values change along the integration path are explicitly evaluated at $a'$.
\end{revtwo}
\subsubsection{Plugging into formal expressions}
Plugging a function $\varphi$ into a formal polynomial or power series $P$ gives a new function $P(\varphi)$. The correspondence between operators on the position and frequency domains will lead to expressions of this kind. In Section~\ref{sec:motivation}, for example, we saw that the differential operator $P\big(\tfrac{\partial}{\partial z}\big)$ on the frequency domain corresponds to multiplication by the function $P(-\zeta)$ on the position domain.
\subsection{Notation}
\subsubsection{Uniform bounds}
Given a complex-valued function $\varphi$ and a nonnegative, real-valued function $\omega$, we will write $|\varphi| \lesssim \omega$ to say that $|\varphi|$ is bounded by a constant multiple of $\omega$. Unless we say otherwise, the bound holds throughout the domain where both functions are defined. If the bound involves an explicit variable, we will say that it holds over all values of the variable. For example, we will write ``$|R_j| \lesssim \lambda^j$ over all $j \in \{0, 1, 2, \ldots\}$'' to mean ``there is a constant $M$ such that for all $j \in \{0, 1, 2, \ldots\}$, the inequality $|R_j| \le M \lambda^j$ holds.''
\subsubsection{Boundary points}
We will say that a subset of a topological space {\em touches} the points in its closure. This is a way of expressing the nearness relation associated with the topology~\cite[Chapter~5, Definition~2.11]{joshi1983gen-top}. If $\Omega \subset \C$ touches the point $p \in \C$, we define a neighborhood of $p$ in $\Omega$ to be $U \cap \Omega$ for some open neighborhood $U \subset \C$ of $p$. This agrees with the usual definition of a neighborhood when $p$ is an element of $\Omega$.
\subsection{Setting}\label{setting}
\subsubsection{The domain}\label{setting:domain}
Throughout this paper, as described in Section~\ref{sec:motivation}, the position variable $\zeta$ will be the standard coordinate on $\C$. 

Take a simply connected open set $\domain \subset \C$ that touches but does not contain $\zeta = 0$ and that satisfies the following condition.
\begin{conditions}
\cond{star}\label{cond:star} The set $\domain$ is star-shaped around $\zeta = 0$. In other words, for any $a \in \domain$, a straight path from $\zeta = 0$ to $a$ stays in $\domain$. Since $\domain$ does not contain $\zeta = 0$, we will always omit that starting point from the path.
\end{conditions}
For the applications we have in mind, $\domain$ typically resembles the set pictured below.
\vspace{2mm}

\begin{center}
\begin{tikzpicture}[scale=0.9]
\newcommand{\spill}{4}
\fill[ietcoast!33, bezier bounding box, path fading=radial edge]
  (-\spill, -\spill) (\spill, \spill)
  (0, 0) -- (160:\spill)
  arc (160:112:\spill) -- (112:2) .. controls (112:1.7) and (103:1.7) .. (103:2) -- (103:\spill)
  arc (103:50:\spill) -- (50:2.6) .. controls (50:2.2) and (43:2.2) .. (43:2.6) -- (43:\spill)
  arc (43:-52:\spill) -- (-52:2.3) .. controls (-52:2) and (-60:2) .. (-60:2.3) -- (-60:\spill)
  arc (-60:-130:\spill) -- (0, 0);
\fill[ietcoast!33!black] circle (0.7mm) node[anchor=195, outer sep=1mm] {$\zeta = 0$};
\end{tikzpicture}
\end{center}
\subsubsection{The prototype operator}\label{setting:basic}
The prototypical example of the kind of operator we will be working with is a holomorphic Volterra operator $\hardpart$ with a separable kernel $\hardker$ and a regular singularity at $\zeta = 0$.

Let $\holo(\domain)$ be the space of holomorphic functions on $\domain$.
\begin{definition}\label{defn:volterra}
A {\em holomorphic Volterra operator} on $\domain$ is a linear operator
\[\genvolterra\colon\holo(\domain)\to\holo(\domain) \]
defined by an integral
\begin{revtwo}
\[ [\genvolterra\,\varphi](a) = \int_{\zeta(a') = 0}^a \genker(a, a')\,\varphi(a')\;d\zeta(a') \]
\end{revtwo}
in which the {\em kernel} $w$ is a holomorphic function on $\Omega^2$.
\end{definition}
\begin{definition}
A holomorphic Volterra operator is {\em separable} if its kernel factors as a product of two functions, one on each factor of $\Omega^2$.
\end{definition}
For the prototype operator $\hardpart$, we will suppose that the kernel $\hardker$ can be written as a ratio
\[ \hardker(a, a') = - \frac{q(a')}{p(a)}, \]
for some $p, q \in \mathcal{H}(\Omega)$. \begin{revtwo}Explicitly,
\[ \big[\hardpart \varphi\big](a) = - \int_{\zeta(a')=0}^{a} \frac{q(a')}{p(a)}\,\varphi(a')\;d\zeta(a')\,. \]
\end{revtwo}

For the purposes of this paper, $\hardpart$ has a {\em regular singularity} at $\zeta=0$ if the following condition holds.
\begin{conditions}
\condconst{sing}{\tau}\label{cond:sing}For some constant $\tau > 0$, the difference
\[ \hardker(a, a) - \frac{\tau}{\zeta(a)} \]
is bounded on a neighborhood of $\zeta(a) = 0$ in $\domain$.
In addition, for each $\sigma > \tau$, there is a neighborhood of $\big(\zeta(a), \zeta(a')\big) = (0, 0)$ in $\Omega^2$ on which
\[ |\hardker(a, a')| < \frac{\sigma}{|\zeta(a)|}. \]
\end{conditions}
For most of our results, we will need to make sure that $\domain$ does not touch any singularities of $\hardker$ other than the one at $\zeta = 0$. We will also need to control $\hardker(a, a')$ when $a$ is away from $\zeta = 0$, requiring $\hardker$ to be bounded on the diagonal in $\domain^2$ and to grow at most exponentially as we move away from the diagonal. These requirements can be combined into one condition.
\begin{conditions}
\condconst{diag$_0$}{\lambda_\Delta}\label{cond:diag-basic} For some constant $\lambda_\Delta$, we have
\[ |\hardker(a, a')| \lesssim \frac{1}{|\zeta(a)|} e^{\lambda_\Delta|\zeta(a)-\zeta(a')|} \]
over all $a, a' \in \domain$.
\end{conditions}
This condition explains the shape of the example domain illustrated in Section~\ref{setting:domain}. As $\domain$ stretches out toward infinity, it has to part around the zeros of $p$, keeping well away from every zero except the one at $\zeta = 0$.
\begin{verify}
By Condition \eqref{cond:diag-basic} we can find $c>0$ such that \[ | \hardker(a, a') | \le \frac{c}{|\zeta(a)|} e^{\lambda_\Delta|\zeta(a)-\zeta(a')|} \]
then setting $\delta=\frac{\log(\sigma/c)}{2\lambda_\Delta}$, we deduce that for every $|\zeta(a)|\le\delta$ and  $|\zeta(a')|\le\delta$
\begin{align*}
|\hardker(a,a')|&\le \frac{c}{|\zeta(a)|} e^{\lambda_\Delta|\zeta(a)-\zeta(a')|}\\
&\le \frac{c}{|\zeta(a)|} e^{\lambda_\Delta(|\zeta(a)|+|\zeta(a')|)}\\
&\frac{c}{|\zeta(a)|} e^{2\lambda_\Delta \delta}\\
&=\frac{\sigma}{|\zeta(a)|}
\end{align*}
\end{verify}

We will occasionally mention an optional condition on $p$ that allows us to state our main results more explicitly.
\begin{conditions}
\condconst{reg-p}{B, \epsilon}\label{cond:reg-p}
For some nonzero constant $B$ and some $\epsilon > 0$,
\[ p \in B\zeta + O\big(|\zeta|^{1 + \epsilon}\big) \]
at $\zeta = 0$.
\end{conditions}
\subsubsection{The perturbed operator}\label{setting:perturbed}

We now perturb $\hardpart$ to a more general operator $\volterra = \hardpart +\softpart$. \begin{revtwo}The perturbation
\[ [\softpart\,\varphi](a) := \int_{\zeta(a') = 0}^a \softker(a, a')\,\varphi(a')\;d\zeta(a') \]
will be nonseparable,\end{revtwo}\footnote{Unless $\softpart$ is zero, of course.} but it will have a regularizing effect, as we will show in Proposition \ref{prop:smoothing}. To get the smoothing effect, we will require \revtwotext{the kernel $\softker$} to vanish to some order $\gamma > 0$ on the diagonal in $\domain^2$. This requirement, combined with two others, will be made precise in Condition~\eqref{cond:eps-lambda}.

Since $\softpart$ is a holomorphic Volterra operator, $\softker$ is a holomorphic function on $\domain^2$. We will allow $\softker(a, a')$ to have a simple pole at $\zeta(a) = 0$, like $\hardker(a, a')$ does, but we will not allow any sharper singularity. We will also put an exponential bound on the growth of $\softker$ away from the diagonal, mimicking Condition~\eqref{cond:diag-basic} on $\hardker$. Altogether, we will require the following:
\begin{conditions}
\condconst{diag$_\star$}{\gamma, \lambda_\Delta}\label{cond:eps-lambda} There is a constant $\gamma > 0$ for which
\[ |\softker(a, a')| \lesssim\frac{|\zeta(a)-\zeta(a')|^\gamma}{|\zeta(a)|}\,e^{\lambda_\Delta|\zeta(a)-\zeta(a')|}\]
over all $a, a' \in \domain$.
\end{conditions}
Notice that this condition prevents $\softker$ from being separable---unless it is zero, of course.

Like $\hardker$, the combined kernel $k = \hardker + \softker$ of $\volterra$ grows in a controlled way when its arguments are near $\zeta = 0$, and when the difference between its arguments grows. We will provide specific bounds in Section~\ref{sec:bounds on k}. 

\subsection{Main results}\label{sec:results}
We want to show that when $\volterra$ is a regular singular Volterra operator of the kind described in Section~\ref{setting:perturbed}, the equation $f = \volterra f$ has a unique solution of a certain form. For the prototypical operator $\hardpart$ described in Section~\ref{setting:basic}, this solution can be written explicitly.
\begin{theorem}\label{thm:basic_volterra}
\begin{revtwo}Suppose the operator
\[ \big[\hardpart \varphi\big](a) := - \int_{\zeta(a')=0}^{a} \frac{q(a')}{p(a)}\,\varphi(a')\;d\zeta(a') \]
\end{revtwo}
satisfies {\em Condition \eqref{cond:sing}}. Then the equation
\begin{equation}\label{eq:hardpart}
f = \hardpart f    
\end{equation}
has the {\em prototype solution}
\begin{equation}\label{eqn:test_solution}
\solproto(a) = \frac{1}{p(a)} \exp\left(-\int_{b}^{a}\frac{q}{p}\;d\zeta\right).
\end{equation}
Changing the base point $b \in \domain$ only multiplies $f_0$ by a nonzero constant.
\end{theorem}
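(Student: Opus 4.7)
The plan is to reduce the integral equation to a first-order ODE that $f_0$ manifestly solves, then verify the ansatz directly by antidifferentiation. Setting $F(a) := p(a)\,f(a)$, the equation $f = \hardpart f$ becomes $F(a) = -\int_{\zeta(a')=0}^a q(a')\,F(a')/p(a')\;d\zeta(a')$. Formally differentiating in $\zeta$ yields the separable ODE $F'(a) = -q(a)\,F(a)/p(a)$, whose solutions are $F(a) = C\exp\bigl(-\int_b^a q/p\;d\zeta\bigr)$; this motivates the ansatz given in the theorem.

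To verify the ansatz, I would observe by direct computation that
\[ d\bigl[p(a')\,f_0(a')\bigr] \;=\; d\exp\Bigl(-\int_b^{a'} q/p\;d\zeta\Bigr) \;=\; -q(a')\,f_0(a')\;d\zeta(a')\,. \]
The integrand of $\hardpart f_0$ is thus an exact differential, so the fundamental theorem of calculus gives
\[ -\int_{\zeta(a')=0}^a q(a')\,f_0(a')\;d\zeta(a') \;=\; p(a)\,f_0(a) - \lim_{\zeta(a')\to 0} p(a')\,f_0(a')\,, \]
provided both sides make sense. Dividing by $p(a)$ then reduces the theorem to showing that this boundary limit vanishes and that the integral converges.

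These two checks are the main step, and both follow from a single boundary estimate. Condition~\eqref{cond:sing} says $-q(a')/p(a') = \hardker(a',a') = \tau/\zeta(a') + O(1)$ as $\zeta(a') \to 0$; since $q/p$ is holomorphic on $\domain$ and $\domain$ is simply connected, integrating this expansion along a convenient radial path to $a'$ gives $\int_b^{a'} q/p\;d\zeta = -\tau\log\zeta(a') + O(1)$, so
\[ p(a')\,f_0(a') \;=\; \exp\Bigl(-\int_b^{a'} q/p\;d\zeta\Bigr) \;=\; O\bigl(|\zeta(a')|^{\tau}\bigr)\,. \]
Because $\tau > 0$, this tends to $0$ as $\zeta(a') \to 0$, so the boundary limit vanishes; multiplying by $|q(a')/p(a')| = O(|\zeta(a')|^{-1})$ then shows $q(a')\,f_0(a') = O(|\zeta(a')|^{\tau-1})$, which is integrable along the straight path from $\zeta = 0$ to $a$ guaranteed by Condition~\eqref{cond:star}. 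Hence $\hardpart f_0 = f_0$, as desired.

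Finally, changing the base point from $b$ to $b'$ multiplies $f_0$ by the constant factor $\exp\bigl(-\int_{b'}^b q/p\;d\zeta\bigr)$, which is nonzero because the exponential never vanishes. The only substantive obstacle in the proof is the boundary estimate above, and that is precisely where the positivity of $\tau$ in Condition~\eqref{cond:sing} enters.
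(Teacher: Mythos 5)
The proposal is correct and takes essentially the same approach as the paper: factor $\solproto = (1/p)\,\chi$ with $\chi = p\,\solproto$ the exponential factor, observe the integrand in $\hardpart f_0$ is an exact differential, apply the fundamental theorem of calculus, and show the boundary term $\lim_{\zeta\to 0}\chi$ vanishes by using the first clause of Condition~\eqref{cond:sing} to obtain $|\chi|\lesssim|\zeta|^\tau$ near the origin, with $\tau>0$ forcing the limit to zero. Your extra explicit check that the integrand is $O(|\zeta|^{\tau-1})$ and hence integrable is a small bonus the paper leaves implicit, but the argument is the same.
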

We will prove this result in Section~\ref{sec:construction}.

The solution $\solproto$ from Theorem~\ref{thm:basic_volterra} has, at worst, a mild power-law singularity at $\zeta = 0$. When $\hardpart$ is especially regular, $\solproto$ also grows at most exponentially as $|\zeta| \to \infty$. The function spaces defined in Section~\ref{fn-spaces} express both of these regularity properties. \begin{revtwo}We will state our results in terms of the spaces $\singexpalg{\sigma}(\domain)$, which are defined as unions of normed spaces. Every function $f \in \singexpalg{\sigma}(\domain)$ has a finite norm
\[ \|f\|_{\sigma,\Lambda} := \sup_\Omega |\zeta|^{-\sigma} e^{-\Lambda|\zeta|}\,|f| \]
for some $\Lambda \in \R$.
\end{revtwo}
\begin{theorem}\label{thm:proto-growth}
\begin{revtwo}Suppose the operator
\[ \big[\hardpart \varphi\big](a) := - \int_{\zeta(a')=0}^{a} \frac{q(a')}{p(a)}\,\varphi(a')\;d\zeta(a') \]
\end{revtwo}
satisfies {\em Condition~\eqref{cond:sing}}. Then, on a small enough neighborhood of $\zeta = 0$, we have $|\solproto| \lesssim |\zeta|^{\tau-1}$.

Suppose $\hardpart$ also satisfies {\em Condition~\eqref{cond:diag-basic}}. Then $f_0$ belongs to the space $\singexpalg{\tau-1}(\domain)$ defined in Section~\ref{fn-spaces}.
\end{theorem}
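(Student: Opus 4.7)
The plan is to work directly from formula~\eqref{eqn:test_solution}, isolating the $\tau$-power behavior. Set
\[ g := \frac{q}{p} + \frac{\tau}{\zeta}, \]
a holomorphic function on $\domain$; Condition~\eqref{cond:sing} says exactly that $-g(a) = \hardker(a,a) - \tau/\zeta(a)$ is bounded on a neighborhood of $\zeta = 0$ in $\domain$. Fixing a branch of $\log\zeta$ on the simply connected domain $\domain$ (possible since $0 \notin \domain$), formula~\eqref{eqn:test_solution} becomes
\[ \solproto(a) \;=\; \frac{\zeta(a)^\tau}{\zeta(b)^\tau\,p(a)}\,\exp\!\left(-\!\int_b^a g\,d\zeta\right)\!, \]
and both claims reduce to bounding the exponential factor and $1/|p|$ separately. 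Condition~\eqref{cond:star} makes every straight ray from $\zeta = 0$ to a point $a \in \domain$ a path in $\domain$, so whenever $g$ is bounded by a constant $M$ on a star-shaped subdomain $U \subseteq \domain$ containing the ray to $a$, the integral
\[ G(a) \;:=\; \int_{\zeta(a') = 0}^a g(a')\,d\zeta(a') \]
along the ray is a well-defined holomorphic antiderivative of $g$ on $U$ with $|G(a)| \le M|\zeta(a)|$, and by path-independence $\int_b^a g\,d\zeta = G(a) - G(b)$.

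For the first claim, restrict attention to a small neighborhood $V_1$ of $\zeta = 0$ in $\domain$ of the form $\{|\zeta| < r\} \cap \domain$, chosen so that $g$ is bounded on $V_1$ by some $M_0$ (using Condition~\eqref{cond:sing}); such a $V_1$ is itself star-shaped around $0$, so the $G$-construction above gives $|G(a)| \le M_0|\zeta(a)|$ on $V_1$ and the exponential factor in $\solproto$ is bounded there. For $1/|p(a)|$, I use the off-diagonal part of~\eqref{cond:sing}: given any $\sigma > \tau$, there is, after shrinking $V_1$ if necessary, a neighborhood $V_2$ of $\zeta = 0$ in $\domain$ such that $|q(a')/p(a)| < \sigma/|\zeta(a)|$ for all $(a, a') \in V_1 \times V_2$. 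Because $\tau > 0$, $q$ cannot vanish identically on $\domain$---otherwise $\hardker(a,a) = 0$ could not differ from $\tau/\zeta(a)$ by a bounded quantity---so $q$ has only isolated zeros and I can pick $a'_0 \in V_2$ with $q(a'_0) \neq 0$. Then $1/|p(a)| < \sigma/(|q(a'_0)|\,|\zeta(a)|) \lesssim 1/|\zeta(a)|$ for $a \in V_1$, yielding $|\solproto(a)| \lesssim |\zeta(a)|^{\tau-1}$ on $V_1$.

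For the second claim, I extend both bounds to all of $\domain$ using Condition~\eqref{cond:diag-basic}. Evaluated on the diagonal, that condition gives $|q(a)/p(a)| \lesssim 1/|\zeta(a)|$, so $|g| \le |q/p| + \tau/|\zeta| \lesssim 1/|\zeta|$ throughout $\domain$; combined with the near-$0$ bound from~\eqref{cond:sing}, $g$ is uniformly bounded on $\domain$, say by $M$. Hence $G$ is defined on all of $\domain$ with $|G(a)| \le M|\zeta(a)|$, giving $|\exp(-\!\int_b^a g\,d\zeta)| \lesssim e^{M|\zeta(a)|}$. For the global bound on $1/|p(a)|$, I apply the full inequality~\eqref{cond:diag-basic} at $(a, a'_0)$ with a fixed $a'_0 \in \domain$ for which $q(a'_0) \neq 0$, obtaining $1/|p(a)| \lesssim (1/|\zeta(a)|)\,e^{\lambda_\Delta|\zeta(a)|}$. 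Combining, $|\solproto(a)| \lesssim |\zeta(a)|^{\tau-1}\,e^{(M + \lambda_\Delta)|\zeta(a)|}$, so $\solproto \in \singexpalg{\tau-1}(\domain)$. The main technical subtlety is establishing the global boundedness of $g$ by combining~\eqref{cond:sing} near $0$ with~\eqref{cond:diag-basic} elsewhere; this is what makes the antiderivative $G$ grow only linearly along the star-paths, which in turn is what keeps the exponential factor within the exponential-type window.
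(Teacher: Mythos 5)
Your proposal is correct and takes essentially the same route as the paper: the same factorization $\solproto = \zeta(b)^{-\tau}\,\zeta^{\tau}\,p^{-1}\exp\big({-\int_b^a (q/p + \tau/\zeta)\,d\zeta}\big)$, the same use of the off-diagonal part of Condition~\eqref{cond:sing} at a fixed second argument where $q \neq 0$ to get $1/|p| \lesssim 1/|\zeta|$ near the origin, and the same use of Condition~\eqref{cond:diag-basic} on the diagonal and at a fixed second argument for the exponential-type bound. The only difference is bookkeeping: you keep $g = q/p + \tau/\zeta$ uniformly bounded on all of $\domain$ and integrate it radially from $\zeta = 0$ using the star-shape condition, whereas the paper splits $\domain$ into near and far regions and bounds $\int |q/p|$ directly in the far region---a minor (and, regarding the choice of integration path, slightly tidier) variation rather than a different method.
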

We will prove this result in Section~\ref{sec:asymptotics}.
\begin{rmk}
When $\hardpart$ also satisfies Condition~\eqref{cond:reg-p}, we can get a better estimate of the prototype solution near $\zeta = 0$, as described in Proposition~\ref{prop:better-proto-estimate} from Section~\ref{sec:asymptotics}.
\end{rmk}

The perturbed equation $f = \volterra f$ is more complicated, but its regular singularity at $\zeta = 0$ is essentially the same. We might therefore expect a solution that looks like $\solproto$ near the singularity, differing only by a less singular perturbation. If we strengthen the constraints on $\hardpart$ a little more, this expectation is fulfilled.
\begin{theorem}\label{thm:general_volterra}
Suppose $\hardpart$ satisfies {\em Conditions \eqref{cond:sing}} and \eqref{cond:diag-basic}, and \begin{revtwo}the perturbation
\[ [\softpart\,\varphi](a) := \int_{\zeta(a') = 0}^a \softker(a, a')\,\varphi(a')\;d\zeta(a') \]
\end{revtwo}
satisfies {\em Condition~\eqref{cond:eps-lambda}}. Then the equation
\[ f = \volterra f, \]
\revtwotext{where $\volterra = \hardpart + \softpart$,} has a unique solution $f$ in the affine subspace
\[ f_0 + \singexpalg{\tau-1+\gamma}(\Omega) \]
of the space $\singexpalg{\tau-1}(\Omega)$ defined in Section~\ref{fn-spaces}. Here, $f_0$ is the prototype solution \eqref{eqn:test_solution} from Theorems \ref{thm:basic_volterra}--\ref{thm:proto-growth}, which belongs to the space $\singexpalg{\tau-1}(\domain)$.
\end{theorem}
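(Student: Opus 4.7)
The plan is to recast $f = \volterra f$ as a Banach contraction fixed-point equation. Writing $f = f_0 + g$ and using $f_0 = \hardpart f_0$ from Theorem~\ref{thm:basic_volterra}, the equation $f = \volterra f$ becomes
\[ g = T g, \qquad T g := \volterra g + \softpart f_0\,. \]
I would then show that $T$ has a unique fixed point in $\singexpalg{\tau-1+\gamma}(\Omega)$ by applying the Banach fixed point theorem on the normed subspace $\singexp{\tau-1+\gamma}{\Lambda}(\Omega)$ for $\Lambda$ chosen large enough.

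First I would verify that the affine term lies in the target space, i.e.\ $\softpart f_0 \in \singexpalg{\tau-1+\gamma}(\Omega)$. Theorem~\ref{thm:proto-growth} places $f_0$ in $\singexpalg{\tau-1}(\Omega)$, so $|f_0(a')| \lesssim |\zeta(a')|^{\tau-1} e^{\Lambda_0|\zeta(a')|}$ for some $\Lambda_0$. Parametrizing the integration path in $[\softpart f_0](a)$ by $\zeta(a') = s\zeta(a)$ with $s \in [0,1]$ and invoking Condition~\eqref{cond:eps-lambda}, the extra $|\zeta(a)-\zeta(a')|^\gamma$ factor in the kernel yields
\[ |[\softpart f_0](a)| \lesssim |\zeta(a)|^{\tau+\gamma}\,e^{\max(\lambda_\Delta,\Lambda_0)|\zeta(a)|}\,B(\tau,\gamma+1), \]
and the excess factor of $|\zeta(a)|$ beyond the target weight $|\zeta(a)|^{\tau-1+\gamma}$ is absorbed into the exponential at the cost of slightly increasing~$\Lambda$.

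Next I would show that $\volterra = \hardpart + \softpart$ is a strict contraction on $(\singexp{\tau-1+\gamma}{\Lambda}(\Omega), \|\cdot\|_{\tau-1+\gamma,\Lambda})$ once $\Lambda$ is large enough. Applied to $f$ with $\|f\|_{\tau-1+\gamma,\Lambda} \le 1$, the same straight-line parametrization and cancellation of powers of $|\zeta(a)|$ reduce the ratio $|[\volterra f](a)|/(|\zeta(a)|^{\tau-1+\gamma} e^{\Lambda|\zeta(a)|})$ to an integral over $s\in[0,1]$ with integrand $|\zeta(a)|\,|\kerwhole(a, a_s)|\,s^{\tau-1+\gamma}\,e^{-\Lambda(1-s)|\zeta(a)|}$. \emph{Near} $\zeta = 0$, Condition~\eqref{cond:sing} replaces $|\hardker(a,a')|$ by $\sigma/|\zeta(a)|$ for any chosen $\sigma > \tau$, so the $\hardpart$ contribution is at most $\sigma \int_0^1 s^{\tau-1+\gamma}\,ds = \sigma/(\tau+\gamma)$; the $\softpart$ contribution on a neighborhood of radius $\delta$ is of order $\delta^\gamma$ by Condition~\eqref{cond:eps-lambda}. \emph{Away from} $\zeta = 0$, Conditions~\eqref{cond:diag-basic} and~\eqref{cond:eps-lambda} leave an integrand dominated by $e^{-(\Lambda-\lambda_\Delta)(1-s)|\zeta(a)|}$, whose integral over $s \in [0,1]$ vanishes as $\Lambda \to \infty$. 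Choosing $\sigma \in (\tau,\tau+\gamma)$, then $\delta$ small, then $\Lambda$ large, makes the operator norm of $\volterra$ strictly less than~$1$.

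The Banach fixed point theorem then supplies a unique $g^* \in \singexp{\tau-1+\gamma}{\Lambda}(\Omega)$ solving $g = T g$, so $f := f_0 + g^*$ is the desired solution; uniqueness in the full union space $\singexpalg{\tau-1+\gamma}(\Omega)$ follows because any two hypothetical fixed points lie in a common $\singexp{\tau-1+\gamma}{\Lambda'}$ after enlarging $\Lambda'$ to meet all of the contraction thresholds. The main obstacle is the sharpness of the local contraction constant: the inequality $\sigma/(\tau+\gamma) < 1$ for $\sigma$ just above $\tau$ is exactly where the improved exponent $\gamma > 0$ enters, and this is the precise reason the theorem seeks the correction $g$ in the more regular space $\singexpalg{\tau-1+\gamma}$ rather than merely $\singexpalg{\tau-1}$.
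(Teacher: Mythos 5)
Your proposal is correct and follows essentially the same route as the paper: decompose $f = f_0 + g$, use $\hardpart f_0 = f_0$ to reduce to the inhomogeneous equation $g = \volterra g + \softpart f_0$, show $\softpart f_0 \in \singexpalg{\tau-1+\gamma}(\domain)$ by the smoothing estimate, and run the near/far contraction argument on $\singexp{\tau-1+\gamma}{\Lambda}(\domain)$ for $\Lambda$ large, with uniqueness in the union space via the inclusions between the normed pieces (this is exactly the paper's Proposition~\ref{prop:smoothing}, Proposition~\ref{prop:get-contraction} and Lemma~\ref{lem:perturbed_volterra}). One cosmetic slip: the smoothing computation actually gives $|[\softpart f_0](a)| \lesssim |\zeta(a)|^{\tau-1+\gamma}\,e^{\Lambda|\zeta(a)|}$, since the kernel's $1/|\zeta(a)|$ cancels the Jacobian of the radial parametrization, so the claimed bound $|\zeta(a)|^{\tau+\gamma}$ is off by one power of $|\zeta|$ and no absorption of an excess factor into the exponential is needed.
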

\begin{corollary}\label{cor:expand_uniq}
Under the conditions of Theorem~\ref{thm:general_volterra}, for any $\rho > \tau$, the equation $f = \volterra f$ has at most one solution in $f_0 + \singexpalg{\rho-1}(\Omega)$. 
\end{corollary}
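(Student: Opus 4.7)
The plan is to adapt the contraction-mapping strategy behind Theorem~\ref{thm:general_volterra} to the Banach space $\singexp{\rho-1}{\Lambda}(\Omega)$ for suitable $\Lambda$. Given two solutions $f_1, f_2 \in f_0 + \singexpalg{\rho-1}(\Omega)$ of $f = \volterra f$, their difference $h := f_1 - f_2$ lies in $\singexpalg{\rho-1}(\Omega)$ and, by linearity of $\volterra$, satisfies the homogeneous equation $h = \volterra h$. It therefore suffices to show that $\volterra$ is a strict contraction on $\singexp{\rho-1}{\Lambda}(\Omega)$ once $\Lambda$ is large enough; then $0$ is the unique fixed point, forcing $h = 0$.

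The contraction bound decomposes along $\volterra = \hardpart + \softpart$. For the prototype part, a direct estimate using Conditions~\eqref{cond:sing} and~\eqref{cond:diag-basic} bounds the operator norm of $\hardpart$ on $\singexp{\sigma}{\Lambda}(\Omega)$ by a constant approaching $\tau/(\sigma + 1)$ as $\Lambda \to \infty$; this is essentially the fixed-point estimate that has to underlie Theorem~\ref{thm:basic_volterra}. Specializing to $\sigma = \rho - 1$, the hypothesis $\rho > \tau$ gives $\tau/\rho < 1$, so the operator norm stays strictly below $1$ for $\Lambda$ large. For $\softpart$, the order-$\gamma$ vanishing of $\softker$ on the diagonal from Condition~\eqref{cond:eps-lambda}, combined with Proposition~\ref{prop:smoothing}, gives an operator norm on $\singexp{\rho-1}{\Lambda}(\Omega)$ that tends to $0$ as $\Lambda \to \infty$ (the extra $|\zeta(a) - \zeta(a')|^\gamma$ factor converts under the exponential weight into a $\Lambda^{-\gamma}$-type damping). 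Summing the two estimates makes $\volterra$ a strict contraction on $\singexp{\rho-1}{\Lambda}(\Omega)$ for $\Lambda$ sufficiently large.

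Enlarging $\Lambda$ further so that $h \in \singexp{\rho-1}{\Lambda}(\Omega)$ as well, the identity $h = \volterra h$ combined with the contraction property yields $\|h\|_{\rho-1,\Lambda} \le c\,\|h\|_{\rho-1,\Lambda}$ with $c < 1$, hence $h = 0$ and $f_1 = f_2$. The main obstacle is verifying that the operator-norm estimate for $\hardpart$ holds uniformly in $\sigma > \tau - 1$, not just at the single value $\sigma = \tau - 1 + \gamma$ used for Theorem~\ref{thm:general_volterra}. Since the relevant kernel bound involves $\sigma$ only through the benign factor $1/(\sigma + 1)$ coming from integrating $|\zeta|^{\sigma}$, the estimate should transfer to any $\sigma = \rho - 1$ with $\rho > \tau$ without structural change.
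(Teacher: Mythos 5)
Your argument is correct, but it takes a different route from the paper's own proof of this corollary. You subtract two solutions, observe that $h = f_1 - f_2$ lies in $\singexpalg{\rho-1}(\Omega)$ and satisfies $h = \volterra h$, and then kill $h$ by a contraction estimate on $\singexp{\rho-1}{\Lambda}(\Omega)$ for large $\Lambda$; note that the contraction statement you need is exactly Proposition~\ref{prop:get-contraction}, which the paper proves for \emph{every} $\rho > \tau$, so the ``main obstacle'' you flag (uniformity of the estimate in the weight exponent) is already settled there and you could simply cite it rather than re-derive it by splitting $\volterra = \hardpart + \softpart$. The paper instead argues softly: it reuses Lemma~\ref{lem:perturbed_volterra} for the inhomogeneous equation $\solptb = \softpart\solproto + \volterra\solptb$, together with the inclusions of Proposition~\ref{prop:inclus-lt-pow-alg}, splitting into the cases $\rho \in (\tau, \tau+\gamma)$ (where $\softpart\solproto \in \singexpalg{\rho-1}(\Omega)$ and uniqueness holds in that space) and $\rho > \tau+\gamma$ (where any solution in $\singexpalg{\rho-1}(\Omega)$ already lies in $\singexpalg{\tau-1+\gamma}(\Omega)$). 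Your approach buys a shorter, case-free argument that needs no inhomogeneous term at all (the difference trick reduces everything to $g = 0$); the paper's buys reuse of already-stated results without re-entering the estimates. Two small inaccuracies in your sketch, neither fatal: Theorem~\ref{thm:basic_volterra} is proved by an explicit construction, not a fixed-point estimate, and the claim that $\|\softpart\|$ on $\singexp{\rho-1}{\Lambda}(\Omega)$ tends to $0$ as $\Lambda \to \infty$ is true (by balancing the $|\zeta|^\gamma$ gain near the origin against the $(\Lambda-\lambda_\Delta)^{-(\gamma+1)}$ decay away from it) but does not follow verbatim from Proposition~\ref{prop:smoothing}, whose target space is $\singexp{\rho-1+\gamma}{\Lambda}(\Omega)$; citing Proposition~\ref{prop:get-contraction} sidesteps both points.
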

This result will follow from a more general result about inhomogeneous equations.
\begin{lemma}\label{lem:perturbed_volterra}
Suppose $\hardpart$ satisfies {\em Conditions \eqref{cond:sing}} and \eqref{cond:diag-basic}, and $\softpart$ satisfies {\em Condition \eqref{cond:eps-lambda}}. Suppose we are also given a function $g$, which for some $\rho > \tau$ belongs to the space $\singexpalg{\rho-1}(\Omega)$ defined in Section~\ref{fn-spaces}. Then the inhomogeneous equation
\[ f = \volterra f + g, \]
has a unique solution $f$ in the space $\singexpalg{\rho-1}(\Omega)$.
\end{lemma}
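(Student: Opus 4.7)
The plan is to obtain $f$ as the unique fixed point of the affine map $T(\varphi) := \volterra\,\varphi + g$ on the Banach space $\singexp{\rho-1}{\Lambda}(\Omega)$ for a suitably large $\Lambda$. Since $g \in \singexpalg{\rho-1}(\Omega)$, we have $g \in \singexp{\rho-1}{\Lambda_0}(\Omega)$ for some $\Lambda_0$. It suffices to show that, for some $\Lambda \ge \Lambda_0$, the operator $\volterra$ maps $\singexp{\rho-1}{\Lambda}(\Omega)$ into itself with operator norm strictly less than $1$; uniqueness within the full union $\singexpalg{\rho-1}(\Omega)$ then follows by enlarging $\Lambda$ to simultaneously contain any two candidate solutions.

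The heart of the proof is the operator-norm estimate $\|\volterra\,\varphi\|_{\rho-1,\Lambda} \le c(\Lambda)\,\|\varphi\|_{\rho-1,\Lambda}$ with $c(\Lambda) < 1$ for $\Lambda$ large. Using Condition~\eqref{cond:star}, one integrates along the straight ray from $\zeta = 0$ to $a$, parametrized by $\zeta(a') = t e^{i\theta}$ for $t \in [0, s]$, where $s := |\zeta(a)|$. Fix $\sigma \in (\tau, \rho)$, which is possible by the hypothesis $\rho > \tau$, and let $\delta > 0$ be a radius small enough that Condition~\eqref{cond:sing} provides $|\hardker(a, a')| \le \sigma/|\zeta(a)|$ whenever $|\zeta(a)|, |\zeta(a')| \le \delta$. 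When $s \le \delta$, the whole integration ray sits in this small neighborhood and
\[ \frac{1}{s^{\rho-1}}\int_0^s \frac{\sigma}{s}\,t^{\rho-1}\,dt = \frac{\sigma}{\rho}, \]
giving the leading bound; the perturbation $\softpart$ only contributes an $O(\delta^\gamma)$ correction by Condition~\eqref{cond:eps-lambda}. When $s > \delta$, the factor $1/|\zeta(a)|$ is bounded by $1/\delta$, and the weight $e^{-\Lambda s}$ combined with the kernel's $e^{\lambda_\Delta(s-t)}$ from Conditions~\eqref{cond:diag-basic} and~\eqref{cond:eps-lambda} produces an integral of size $O(1/(\Lambda - \lambda_\Delta))$, tending to zero as $\Lambda \to \infty$. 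Combining the two regimes yields $c(\Lambda) < 1$ once $\Lambda$ is sufficiently large.

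With contractivity in hand, the Banach fixed-point theorem delivers the unique solution $f$. The main obstacle is the estimate near $\zeta = 0$: the singular factor $1/|\zeta(a)|$ cannot be absorbed by inflating $\Lambda$, so contractivity there must come entirely from the algebraic margin $\tau < \sigma < \rho$ together with the regularizing power $t^{\rho-1}$ built into the norm. This is essentially the same balance driving Theorem~\ref{thm:basic_volterra} and the asymptotic statement of Theorem~\ref{thm:proto-growth}, so the kernel bounds assembled in Section~\ref{sec:bounds on k} should make the actual computation routine.
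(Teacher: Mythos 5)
Your proposal is correct and follows essentially the same route as the paper: choose $\sigma \in (\tau,\rho)$, split the domain at radius $\delta$, show $\volterra$ has operator norm $\le \sigma/\rho + o(1)$ on the near region via the straight-ray parametrization and Condition~\eqref{cond:sing} (with $\softpart$ contributing a small correction that shrinks with $\delta$), and make the far-region contribution arbitrarily small by enlarging $\Lambda$, then apply the contraction mapping theorem on $\singexp{\rho-1}{\Lambda}(\Omega)$ and promote uniqueness to $\singexpalg{\rho-1}(\Omega)$ by comparing fixed points across nested spaces. This is precisely the content of Proposition~\ref{prop:get-contraction} and its deployment in the paper's proof of Lemma~\ref{lem:perturbed_volterra}.
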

We will prove Lemma~\ref{lem:perturbed_volterra} in Sections \ref{sec:V is a contraction}--\ref{sec:existence and uniqueness}, using the contraction mapping theorem. The heart of the argument is Proposition~\ref{prop:get-contraction}, which shows us how to find relevant subspaces where $\volterra$ is a contraction.

We will reduce Theorem~\ref{thm:general_volterra} to Lemma~\ref{lem:perturbed_volterra} in Section~\ref{sec:existence and uniqueness}, by rewriting the homogeneous equation $f = \volterra f$ in the subspace $f_0 + \singexpalg{\tau-1+\gamma}(\Omega)$ as an inhomogeneous equation in a more regular space. To show that the inhomogeneous term $\softpart f_0$ is regular enough, we will use Proposition~\ref{prop:smoothing} from Section~\ref{sec:image under soft_part} to prove that $\softpart$ improves on the regularity ascribed by Theorem~\ref{thm:proto-growth} to the prototype solution $f_0$.
\begin{rmk}
When $\hardpart$ also satisfies Condition~\eqref{cond:reg-p}, Theorem~\ref{thm:general_volterra} can be restated to give a unique solution in the affine subspace
\[ \zeta^{\tau-1} + \singexpalg{\rho-1}(\domain) \]
when $\rho > \tau$ is low enough, as described in Proposition~\ref{prop:alt-general_volterra} from Section~\ref{sec:existence and uniqueness}.
\end{rmk}
\subsection{Acknowledgements}
We are grateful for the conversations with Kihyun Kim, Angeliki Menegaki, and Sung-Jin Oh that informed the early stages of this work. This paper is a result of the ERC-SyG project, Recursive and Exact New Quantum Theory (ReNewQuantum) which received funding from the European Research Council (ERC) under the European Union's Horizon 2020 research and innovation programme under grant agreement No 810573. It was also made possible by the hospitality of the Institut des Hautes \'{E}tudes Scientifiques during a return visit by AF, funded by the Fondation Math\'{e}matique Jacques Hadamard under the \textit{Junior Scientific Visibility} program.
\section{Function spaces for holomorphic Volterra operators}\label{fn-spaces}
\subsection{Weighted holomorphic $L^{\infty}$ spaces}\label{sec:fn-space-defs}
Throughout this paper, as described in Section~\ref{sec:motivation}, the position variable $\zeta$ will be the standard coordinate on $\C$. Take a simply connected open set $\domain \subset \C$ that touches but does not contain $\zeta = 0$. Let $\cont(\domain)$ be the space of continuous complex-valued functions on $\domain$, equipped with the compact-open topology. This is the coarsest topology in which the seminorm $f \mapsto \sup_K |f|$ is continuous for every compact subset $K \subset \domain$~\cite[Example~2.6 and Section~4 notes]{fnl-cpx-anal}.

The holomorphic functions form a closed subspace $\holo(\domain) \subset \cont(\domain)$~\cite[Proposition~3.14]{fnl-cpx-anal}. We first restrict our attention to holomorphic functions on $\domain$ which are uniformly of exponential type $\Lambda \in \R$.
\begin{definition}\label{def:unif-exp}
For any $\Lambda \in \R$, let
\[ \singexp{0}{\Lambda}(\domain) \subset \holo(\domain) \]
be the subspace consisting of functions $f$ with $|f| \lesssim e^{\Lambda|\zeta|}$ over $\domain$, equipped with the norm
\[ \|f\|_{0, \Lambda} := \sup_\Omega e^{-\Lambda|\zeta|}\,|f|. \]
\end{definition}
With respect to the seminorm on $\holo(\domain)$ given by a compact set $K \subset \domain$, the inclusion map $\singexp{0}{\Lambda}(\domain) \hookrightarrow \holo(\domain)$ has norm $\sup_K e^{\Lambda |\zeta|}$. That means the inclusion is continuous.
\begin{rmk}
The subspace of functions of exponential type $\Lambda$ strictly contains $\singexp{0}{\Lambda}(\domain)$. A function $f$ is of exponential type $\Lambda$ if for every $\varepsilon>0$, there is a constant $A_\varepsilon$ such that $|f|\le A_\varepsilon e^{(\Lambda+\varepsilon)|\zeta|}$. Definition~\ref{def:unif-exp} instead requires a uniform constant $A$ such that $|f| \le A e^{\Lambda|\zeta|}$.\end{rmk}
We now relax our norm to allow both exponential growth at infinity and a power-law singularity at $\zeta = 0$.
\begin{definition}
For any $\sigma, \Lambda \in \R$, let
\[ \singexp{\sigma}{\Lambda}(\domain) \subset \holo(\domain) \]
be the subspace consisting of functions $f$ with $|f| \lesssim |\zeta|^\sigma e^{\Lambda|\zeta|}$ over $\domain$, equipped with the norm
\[ \|f\|_{\sigma,\Lambda} := \sup_\Omega |\zeta|^{-\sigma} e^{-\Lambda|\zeta|}\,|f|. \]
\end{definition}
The inclusion $\singexp{\sigma}{\Lambda}(\domain) \hookrightarrow \holo(\domain)$ is again continuous.
\begin{proposition}\label{exp-complete}
The space $\singexp{\sigma}{\Lambda}(\domain)$ is complete.
\end{proposition}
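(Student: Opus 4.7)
The plan is to reduce completeness of $\singexp{\sigma}{\Lambda}(\domain)$ to the already-known completeness of $\holo(\domain)$ in the compact-open topology, then upgrade compact-open convergence to norm convergence. The paragraph just before the proposition observes that the inclusion $\singexp{\sigma}{\Lambda}(\domain) \hookrightarrow \holo(\domain)$ is continuous with respect to every compact-open seminorm, since the weight $|\zeta|^\sigma e^{\Lambda|\zeta|}$ is bounded above on each compact $K \subset \domain$. Therefore, given a Cauchy sequence $(f_n)$ in $\singexp{\sigma}{\Lambda}(\domain)$, it is Cauchy for every compact-open seminorm; because $\holo(\domain)$ is a closed subspace of $\cont(\domain)$ and the compact-open topology makes the latter complete, $(f_n)$ has a limit $f \in \holo(\domain)$, and in particular $f_n \to f$ pointwise on $\domain$.

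Next I would upgrade to norm convergence by the standard ``let the second index escape to infinity inside the inequality'' trick. Given $\varepsilon > 0$, choose $N$ so that $\|f_n - f_m\|_{\sigma,\Lambda} \le \varepsilon$ for all $n, m \ge N$; this expands to the pointwise inequality
\[ |f_n(\zeta) - f_m(\zeta)| \le \varepsilon\,|\zeta|^\sigma e^{\Lambda|\zeta|} \]
for every $\zeta \in \domain$. Sending $m \to \infty$ and using pointwise convergence, the same bound holds with $f_m$ replaced by $f$, which gives $\|f_n - f\|_{\sigma,\Lambda} \le \varepsilon$ for $n \ge N$. Membership $f \in \singexp{\sigma}{\Lambda}(\domain)$ then follows from the triangle inequality $\|f\|_{\sigma,\Lambda} \le \|f_N\|_{\sigma,\Lambda} + \varepsilon < \infty$.

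There is no real obstacle: the argument is entirely standard for weighted sup-norm spaces, and the weight $|\zeta|^\sigma e^{\Lambda|\zeta|}$ causes no trouble because $\domain$ does not contain $\zeta = 0$, so the weight is continuous and strictly positive on $\domain$ and bounded between positive constants on every compact subset. The only point I would be careful about is extracting a bona fide holomorphic limit before discussing its norm—that is, invoking completeness of $\holo(\domain)$ first, and only then checking that the limit lies in the weighted subspace.
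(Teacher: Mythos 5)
Your proposal is correct and follows essentially the same route as the paper: pass to the compact-open topology via the continuous inclusion into $\holo(\domain)$, use completeness of $\holo(\domain)$ to get a holomorphic limit, then upgrade the Cauchy bound to norm convergence by letting the second index go to infinity pointwise, and conclude membership by the triangle inequality. No gaps.
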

\begin{proof}
Take a Cauchy sequence $(f_j)_{j \in \N}$ in $\singexp{\sigma}{\Lambda}(\domain)$. This sequence is Cauchy in $\holo(\domain)$ as well, because the inclusion map $\singexp{\sigma}{\Lambda}(\domain) \hookrightarrow \holo(\domain)$ is bounded with respect to each of the seminorms on $\holo(\domain)$ given by $f \mapsto \sup_K |f|$ for compact subsets $K \subset \domain$. Since $\holo(\domain)$ is complete~\cite[Proposition~3.5]{fnl-cpx-anal},\footnote{That is, a sequence which is Cauchy in each of the seminorms on $\holo(\domain)$ will always converge in the topology of $\holo(\domain)$, which is the coarsest topology in which all of the seminorms are continuous.} our sequence converges to a function $f$ there.

The Cauchy property in $\singexp{\sigma}{\Lambda}(\domain)$ says that for any $r > 0$, there is some $n$ for which $|\zeta|^{-\sigma} e^{-\Lambda |\zeta|}\,|f_k - f_n| \le r$ whenever $k \ge n$. Since convergence in $\holo(\domain)$ implies pointwise convergence, it follows that $|\zeta|^{-\sigma} e^{-\Lambda |\zeta|}\,|f - f_n| \le r$. This reasoning shows that $(f_j)_{j \in \N}$ converges to $f$ in the norm $\|\cdot\|_{\sigma, \Lambda}$. It also shows that $f$ is in $\singexp{\sigma}{\Lambda}(\domain)$: picking some $r > 0$, we have
\begin{align*}
|\zeta|^{-\sigma} e^{-\Lambda |\zeta|}\,|f| & \le |\zeta|^\sigma e^{-\Lambda |\zeta|}\,|f - f_n| + |\zeta|^{-\sigma} e^{-\Lambda |\zeta|}\,|f_n| \\
& \le r + \|f_n\|_{\sigma,\Lambda}
\end{align*}
for the corresponding $n$, so $|\zeta|^{-\sigma} e^{-\Lambda |\zeta|}\,|f|$ is bounded.
\end{proof}
\subsection{Continuous inclusions between different $\singexp{\sigma}{\Lambda}(\Omega)$}\label{sec:inclusions}
\begin{proposition}\label{prop:inclus-ge-exp}
If $\Lambda'\leq\Lambda$, the inclusion map $\singexp{\sigma}{\Lambda'}(\Omega)\hookrightarrow \singexp{\sigma}{\Lambda}(\Omega)$ is continuous.
\end{proposition}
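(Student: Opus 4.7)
The plan is entirely elementary: everything reduces to the monotonicity of $t \mapsto e^{-t}$ and of the exponential weight in the parameter $\Lambda$. I do not expect any serious obstacle.

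First, I would check that the inclusion is well-defined as a map of sets. If $f \in \singexp{\sigma}{\Lambda'}(\domain)$, then $|f| \lesssim |\zeta|^\sigma e^{\Lambda'|\zeta|}$ throughout $\domain$. Since $\Lambda' \le \Lambda$ and $|\zeta| \ge 0$, we have $\Lambda'|\zeta| \le \Lambda|\zeta|$, hence $e^{\Lambda'|\zeta|} \le e^{\Lambda|\zeta|}$. Therefore $|f| \lesssim |\zeta|^\sigma e^{\Lambda|\zeta|}$, which places $f$ in $\singexp{\sigma}{\Lambda}(\domain)$.

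Next, I would show that the inclusion is bounded (hence continuous) by the same monotonicity applied to the negative exponents. Pointwise on $\domain$,
\[ |\zeta|^{-\sigma} e^{-\Lambda|\zeta|}\,|f| \le |\zeta|^{-\sigma} e^{-\Lambda'|\zeta|}\,|f|, \]
since $-\Lambda|\zeta| \le -\Lambda'|\zeta|$. Taking the supremum over $\domain$ on both sides yields $\|f\|_{\sigma,\Lambda} \le \|f\|_{\sigma,\Lambda'}$, so the inclusion has operator norm at most $1$. A linear map between normed spaces that is bounded is continuous, which completes the argument.
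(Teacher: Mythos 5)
Your proof is correct and follows essentially the same approach as the paper: both bound $\|f\|_{\sigma,\Lambda}$ by $\|f\|_{\sigma,\Lambda'}$ via the pointwise inequality $e^{-\Lambda|\zeta|} \le e^{-\Lambda'|\zeta|}$, so the inclusion has operator norm at most $1$. Your explicit preliminary check that the inclusion is well-defined as a map of sets is a small addition the paper leaves implicit, but the substance is identical.
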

\begin{proof}
By definition,
\[ \|f\|_{\sigma,\Lambda}=\sup_{\Omega} |\zeta|^{-\sigma}\,e^{-\Lambda |\zeta|}\, |f|. \]
The norm $\|\cdot\|_{\sigma, \Lambda'}$ is designed to give $|f| \le |\zeta|^\sigma\,e^{\Lambda'|\zeta|}\,\|f\|_{\sigma, \Lambda'}$, implying that
\begin{align*}
\|f\|_{\sigma,\Lambda} & \leq \sup_{\Omega} |\zeta|^{-\sigma}\,e^{-\Lambda |\zeta|}\,|\zeta|^\sigma\,e^{\Lambda'|\zeta|}\,\|f\|_{\sigma, \Lambda'}\\
&=\sup_{\Omega} e^{-(\Lambda-\Lambda') |\zeta|}\,\|f\|_{\sigma, \Lambda'}\\
&\leq \|f\|_{\sigma,\Lambda'}.
\end{align*}
In the last step, we use the assumption that $\Lambda' \le \Lambda$.
\end{proof}
The union $\bigcup_{\Lambda \in \R} \singexp{0}{\Lambda}(\domain)$ is the subspace comprising all functions of exponential type. The continuous inclusions between the $\singexp{0}{\Lambda}(\domain)$ define a limit topology on this subspace. The spaces $\singexp{\sigma}{\Lambda}(\domain)$ fit together analogously for other $\sigma \in \R$.
\begin{definition}\label{def:exp-top}
For any $\sigma \in \R$, let $\singexpalg{\sigma}(\domain)$ be the union
\[ \bigcup_{\Lambda \in \R} \singexp{\sigma}{\Lambda}(\domain) \]
equipped with the finest topology that makes all the inclusions $\singexp{\sigma}{\Lambda}(\domain) \hookrightarrow \singexpalg{\sigma}(\domain)$ continuous.
\end{definition}
\begin{proposition}\label{prop:inclus-lt-pow-gt-exp}
If $\sigma'>\sigma$ and $\Lambda'<\Lambda$, the inclusion map $\singexp{\sigma'}{\Lambda'}(\Omega)\hookrightarrow \singexp{\sigma}{\Lambda}(\Omega)$ is continuous.
\end{proposition}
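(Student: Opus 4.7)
The plan is to reduce the continuity statement to a single uniform bound of the form $\|f\|_{\sigma,\Lambda} \le C\,\|f\|_{\sigma',\Lambda'}$, in direct analogy with the calculation in the proof of Proposition~\ref{prop:inclus-ge-exp}. Since both spaces are normed, continuity of the inclusion is equivalent to the existence of such a constant $C$.

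Starting from the defining estimate $|f| \le |\zeta|^{\sigma'} e^{\Lambda'|\zeta|}\,\|f\|_{\sigma',\Lambda'}$ for any $f \in \singexp{\sigma'}{\Lambda'}(\Omega)$, I would multiply through by $|\zeta|^{-\sigma} e^{-\Lambda|\zeta|}$ to get
\[ |\zeta|^{-\sigma} e^{-\Lambda|\zeta|}\,|f| \;\le\; |\zeta|^{\sigma'-\sigma}\,e^{-(\Lambda-\Lambda')|\zeta|}\,\|f\|_{\sigma',\Lambda'}, \]
and then take the supremum over $\Omega$. The work therefore reduces to bounding the single-variable function $r \mapsto r^{\sigma'-\sigma} e^{-(\Lambda-\Lambda')r}$ on $[0, \infty)$.

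This is a routine maximization. Both exponents $\sigma'-\sigma$ and $\Lambda-\Lambda'$ are strictly positive by hypothesis, so the function vanishes at $r = 0$, decays exponentially as $r \to \infty$, and attains its global maximum at $r^* = (\sigma'-\sigma)/(\Lambda-\Lambda')$, with value
\[ C \;=\; \left(\tfrac{\sigma'-\sigma}{\Lambda-\Lambda'}\right)^{\sigma'-\sigma} e^{-(\sigma'-\sigma)}. \]
Substituting this bound gives $\|f\|_{\sigma,\Lambda} \le C\,\|f\|_{\sigma',\Lambda'}$, which proves continuity.

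There is no real obstacle here; the only subtlety worth flagging is that both strict inequalities $\sigma' > \sigma$ and $\Lambda' < \Lambda$ are needed, because the first ensures the power of $|\zeta|$ does not blow up as $|\zeta| \to 0$ and the second ensures the exponential factor decays as $|\zeta| \to \infty$. If either inequality were merely non-strict, the argument of Proposition~\ref{prop:inclus-ge-exp} would already cover that case separately.
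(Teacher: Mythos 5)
Your proposal is correct and follows essentially the same route as the paper: both factor the weight as $|\zeta|^{\sigma'-\sigma}e^{-(\Lambda-\Lambda')|\zeta|}$ times the $(\sigma',\Lambda')$-weight and bound the extra factor uniformly, the only difference being that you compute the explicit maximum $\bigl(\tfrac{\sigma'-\sigma}{\Lambda-\Lambda'}\bigr)^{\sigma'-\sigma}e^{-(\sigma'-\sigma)}$ while the paper merely observes boundedness near $\zeta=0$ and at infinity.
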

\begin{proof}
By definition,
\begin{align*}
\|f\|_{\sigma,\Lambda}&=\sup_{\Omega} |\zeta|^{-\sigma}  e^{-\Lambda |\zeta|} |f|\\
&= \sup_{\Omega} |\zeta|^{\sigma'-\sigma}\,|\zeta|^{-\sigma'}\,e^{-\Lambda'|\zeta|}\,  e^{-(\Lambda-\Lambda') |\zeta|} \, |f|.
\end{align*}
The function $|\zeta|^{\sigma'-\sigma}\,  e^{-(\Lambda-\Lambda') |\zeta|}$ is bounded near $\zeta = 0$ because the power of $|\zeta|$ is positive, and it is bounded far from $\zeta = 0$ thanks to the decaying exponential. Hence,
\begin{align*}
\|f\|_{\sigma,\Lambda}&\leq C\sup_\Omega  |\zeta|^{-\sigma'}\, e^{-\Lambda'|\zeta|} \, |f|\\
&=C \|f\|_{\sigma',\Lambda'}
\end{align*}
for $C = \sup_{\Omega}  |\zeta|^{\sigma'-\sigma}\,  e^{-(\Lambda-\Lambda') |\zeta|}$.
\end{proof}
\begin{proposition}\label{prop:inclus-lt-pow-alg}
When $\sigma' > \sigma$, there is a continuous inclusion $\singexpalg{\sigma'}(\domain)\hookrightarrow \singexpalg{\sigma}(\domain)$.
\end{proposition}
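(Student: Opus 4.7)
The plan is to exploit the universal property of the inductive limit topology. By Definition~\ref{def:exp-top}, the topology on $\singexpalg{\sigma'}(\domain)$ is the finest one making every inclusion $\singexp{\sigma'}{\Lambda}(\domain)\hookrightarrow \singexpalg{\sigma'}(\domain)$ continuous. A standard consequence is that, for any target space $Y$, a map $\singexpalg{\sigma'}(\domain) \to Y$ is continuous if and only if its precomposition with each inclusion $\singexp{\sigma'}{\Lambda}(\domain)\hookrightarrow \singexpalg{\sigma'}(\domain)$ is continuous. Applied to the set-theoretic inclusion $\singexpalg{\sigma'}(\domain)\hookrightarrow \singexpalg{\sigma}(\domain)$, this reduces the proposition to showing that, for every $\Lambda \in \R$, the composition
\[ \singexp{\sigma'}{\Lambda}(\domain) \hookrightarrow \singexpalg{\sigma}(\domain) \]
is continuous.

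To establish the latter, I would factor it through a single normed piece of the target. Pick any $\Lambda'' > \Lambda$; for instance, $\Lambda'' := \Lambda + 1$. Then Proposition~\ref{prop:inclus-lt-pow-gt-exp}, applied with the hypotheses $\sigma' > \sigma$ (given) and $\Lambda < \Lambda''$ (by construction), gives a continuous inclusion
\[ \singexp{\sigma'}{\Lambda}(\domain) \hookrightarrow \singexp{\sigma}{\Lambda''}(\domain)\,. \]
By definition of the limit topology on $\singexpalg{\sigma}(\domain)$, the inclusion $\singexp{\sigma}{\Lambda''}(\domain) \hookrightarrow \singexpalg{\sigma}(\domain)$ is continuous. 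Composing these two continuous maps gives the required continuity, and this holds for every $\Lambda \in \R$, which completes the argument.

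There is no real computational obstacle here; the only subtlety is invoking the universal property of the colimit topology correctly, and making sure the strict inequality $\sigma' > \sigma$ provides enough slack to absorb a slight increase in the exponential rate. The strict inequality $\sigma' > \sigma$ is essential, since it is exactly what makes $|\zeta|^{\sigma'-\sigma} e^{(\Lambda - \Lambda'')|\zeta|}$ bounded near $\zeta = 0$ in the hypothesis of Proposition~\ref{prop:inclus-lt-pow-gt-exp}.
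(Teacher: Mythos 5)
Your proof is correct and follows essentially the same route as the paper: factor each $\singexp{\sigma'}{\Lambda}(\domain)$ through $\singexp{\sigma}{\Lambda''}(\domain)$ with $\Lambda'' > \Lambda$ via Proposition~\ref{prop:inclus-lt-pow-gt-exp}, then invoke the universal property of the limit topology from Definition~\ref{def:exp-top}. The only cosmetic difference is that the paper phrases the universal property as gluing a compatible family of continuous maps (checking that the triangles commute because everything is an inclusion into $\holo(\domain)$), whereas you phrase it as continuity of the given set-theoretic inclusion being checkable on each precomposition — these are equivalent formulations.
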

\begin{proof}
For each $\Lambda'$, we define a continuous inclusion $\singexp{\sigma'}{\Lambda'}(\domain) \hookrightarrow \singexpalg{\sigma}(\domain)$ by choosing some $\Lambda > \Lambda'$ and composing the continuous inclusions
\begin{center}
\begin{tikzcd}[column sep=25mm, labels={inner sep=0.75ex}]
\singexp{\sigma'}{\Lambda'}(\domain) \arrow[r, hook, "\text{\footnotesize Proposition~\ref{prop:inclus-lt-pow-gt-exp}}"'] & \singexp{\sigma}{\Lambda}(\domain) \arrow[r, hook, "\text{\footnotesize Definition~\ref{def:exp-top}}"'] & \singexpalg{\sigma}(\domain).
\end{tikzcd}
\end{center}
For any $\Lambda'' < \Lambda'$, the inclusions
\begin{center}
\begin{tikzcd}[row sep=6mm, column sep=8mm, labels={inner sep=0.75ex}]
\singexp{\sigma'}{\Lambda''}(\domain) \arrow[rr, hook, "\text{\footnotesize Proposition~\ref{prop:inclus-ge-exp}}"] \arrow[rd, hook] & & \singexp{\sigma'}{\Lambda'}(\domain) \arrow[ld, hook] \\
& \singexpalg{\sigma}(\domain)
\end{tikzcd}
\end{center}
automatically commute, because as nontopological linear maps they are all inclusions between subspaces of $\holo(\domain)$. The existence of the desired continuous inclusion $\singexpalg{\sigma'}(\Omega)\hookrightarrow \singexpalg{\sigma}(\Omega)$ then follows from the universal property of the limit topology.
\end{proof}
\section{Solving holomorphic Volterra equations}\label{sec:proof_main_results}
\subsection{Overview}
In this section, we prove the results stated in Section~\ref{sec:results}, which lay out a method for solving the regular singular Volterra equation $\solwhole = \volterra \solwhole$.

We first construct the prototype solution $\solproto$ from the kernel of $\hardpart$. We show in Section~\ref{sec:proto-construction-regularity} that $\solproto$ satisfies the equation $\solproto = \hardpart \solproto$ and belongs to the space $\singexp{\tau-1}{\lambda_0}(\domain)$.

We then study how the perturbation $\softpart$ affects $\solproto$. We show in Section~\ref{sec:image under soft_part} that $\softpart$ has a smoothing effect, reducing the sharpness of any power-law singularity at $\zeta = 0$. In particular, it sends $\solproto$ into $\singexpalg{\tau-1+\gamma}(\domain)$.

We conclude that $\volterra$ sends $\solproto$ into the affine subspace $\solproto + \singexpalg{\tau-1+\gamma}(\domain)$, suggesting that the equation $\solwhole = \volterra \solwhole$ has a solution there. To confirm this, we show in Section~\ref{sec:V is a contraction} that $\volterra$ is a contraction of $\singexp{\tau-1+\gamma}{\Lambda}(\domain)$ when $\Lambda$ is large enough. This implies that $\volterra$ has a unique fixed point in $\solproto + \singexpalg{\tau-1+\gamma}(\domain)$, as we show in Section~\ref{sec:existence and uniqueness}.
\subsection{Construction and regularity of the prototype solution \\ \textit{(proof of Theorems~\ref{thm:basic_volterra}--\ref{thm:proto-growth})}}\label{sec:proto-construction-regularity}
\subsubsection{Construction}\label{sec:construction}
\begin{proof}[Proof of Theorem~\ref{thm:basic_volterra}]
Rewrite $\solproto$ as $(1/p)\,\chi$, where
\[ \chi(a) = \exp\left(-\int_{b}^{a}\frac{q}{p}\;d\zeta\right). \]
Observing that $d\chi = -(q/p)\,\chi\;d\zeta$ simplifies the calculation of $\hardpart \solproto$. For each $a \in \domain$,
\begin{align*}
\big[\hardpart \solproto\big](a) &= - \int_{\zeta=0}^{a} \frac{q}{p(a)}\,\solproto\;d\zeta \\
& = -\int_{\zeta=0}^{a} \frac{q}{p(a)}\,\frac{1}{p}\,\chi\;d\zeta \\
& = - \frac{1}{p(a)}  \int_{\zeta=0}^{a} \frac{q}{p}\,\chi\;d\zeta\\
& = \frac{1}{p(a)} \int_{\zeta=0}^{a}\;d\chi \\
& = \frac{1}{p(a)} \left[ \chi(a) - \lim_{\zeta \to 0} \chi \right] \\
& = \solproto(a) - \frac{1}{p(a)} \lim_{\zeta \to 0} \chi.
\end{align*}
To prove that $\hardpart \solproto = \solproto$, we must now show that $\lim_{\zeta \to 0} \chi = 0$.

By Condition~\eqref{cond:sing}, there exist a radius $\delta>0$ and a constant $C$ such that
\begin{equation}\label{eqn:sing-bound}
\left|\frac{q}{p} + \frac{\tau}{\zeta}\right| < C
\end{equation}
whenever $|\zeta| < \delta$. To use this bound, we rewrite the integral in the definition of $\chi$:
\begin{align*}
-\int_b^a \frac{q}{p}\;d\zeta & = \int_b^a \frac{\tau}{\zeta}\;d\zeta - \int_b^a \left( \frac{q}{p} + \frac{\tau}{\zeta} \right)\;d\zeta \\
& = \tau \log\left(\frac{\zeta(a)}{\zeta(b)}\right) - \int_b^a \left( \frac{q}{p} + \frac{\tau}{\zeta} \right)\;d\zeta.
\end{align*}
Exponentiating both sides shows that
\[ \chi(a) = \left(\frac{\zeta(a)}{\zeta(b)}\right)^\tau \exp\left[-\int_b^a \left( \frac{q}{p} + \frac{\tau}{\zeta} \right)\;d\zeta\right]. \]
Recalling that a change of base point just multiplies $\solproto$ by a nonzero constant, choose the base point $b \in \Omega$ so that $|\zeta(b)| = \delta/2$. The exponential factor in the formula above is then bounded between $\exp\big({-\tfrac{3}{2}C\delta}\big)$ and $\exp\big(\tfrac{3}{2}C\delta\big)$ whenever $|\zeta(a)| < \delta$. Since $\tau$ is positive, this is enough to show that $\lim_{\zeta \to 0} \chi = 0$. It follows, as discussed above, that $\hardpart \solproto = \solproto$.
\end{proof}
\subsubsection{Regularity}\label{sec:asymptotics}
Theorem~\ref{thm:proto-growth} comprises two results with different conditions. We will prove them separately as Propositions \ref{prop:asymptotic at zero} and \ref{prop:asymptotic at infinity}.
\begin{proposition}\label{prop:asymptotic at zero}
Suppose $\hardpart$ satisfies {\em Condition~\eqref{cond:sing}}. Then, on a small enough neighborhood of $\zeta = 0$, we have $|\solproto| \lesssim |\zeta|^{\tau-1}$.
\end{proposition}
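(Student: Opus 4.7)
My plan is to reuse the decomposition from the proof of Theorem~\ref{thm:basic_volterra}: write $\solproto = \chi/p$, where
\[ \chi(a) = \left(\frac{\zeta(a)}{\zeta(b)}\right)^\tau \exp\left[-\int_b^a \left(\frac{q}{p} + \frac{\tau}{\zeta}\right)d\zeta\right]. \]
The first part of Condition~\eqref{cond:sing} says that $q/p + \tau/\zeta$ is bounded on a neighborhood of $\zeta = 0$ in $\domain$, so the exponential factor above is bounded on a (possibly smaller) neighborhood, which immediately gives $|\chi(a)| \lesssim |\zeta(a)|^\tau$ there.

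It then remains to show that $|1/p(a)| \lesssim 1/|\zeta(a)|$ on a neighborhood of $\zeta = 0$. I would extract this from the second part of Condition~\eqref{cond:sing}, which for any choice of $\sigma > \tau$ supplies a neighborhood of $(\zeta(a), \zeta(a')) = (0,0)$ in $\domain^2$ on which $|q(a')|/|p(a)| < \sigma/|\zeta(a)|$. First I would observe that $q$ is not identically zero on $\domain$---otherwise $\hardker$ would vanish on the diagonal, which is incompatible with the first part of Condition~\eqref{cond:sing} for $\tau > 0$. Because $q$ is holomorphic and $\domain$ is connected, the zeros of $q$ are isolated; since $\domain$ touches $\zeta = 0$, I can therefore fix a point $a_0' \in \domain$ whose coordinate $\zeta(a_0')$ is close enough to $0$ to lie inside the chosen neighborhood and for which $q(a_0') \ne 0$. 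Specializing the bound to $a' = a_0'$ then yields $|p(a)| \ge |q(a_0')|\,|\zeta(a)|/\sigma$ on a neighborhood of $\zeta(a) = 0$ in $\domain$.

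Multiplying the two estimates gives $|\solproto(a)| = |\chi(a)|/|p(a)| \lesssim |\zeta(a)|^{\tau - 1}$ on a neighborhood of $\zeta = 0$, as claimed. I expect the lower bound on $|p|$ to be the main obstacle: the diagonal part of Condition~\eqref{cond:sing} alone only produces the upper bound $|p| \lesssim |q|\,|\zeta|$, so the off-diagonal part is what forces $|p|$ down to order at least $|\zeta|$, and picking the evaluation point $a_0'$ away from the zeros of $q$ is the one place where analyticity plays a role. The remaining manipulations---producing the explicit form of $\chi$ and combining the two bounds---are routine once the decomposition is in place.
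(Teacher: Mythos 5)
Your proposal is correct and takes essentially the same route as the paper: it reuses the bound $|\chi| \lesssim |\zeta|^\tau$ from the proof of Theorem~\ref{thm:basic_volterra} and then invokes the off-diagonal part of Condition~\eqref{cond:sing} with the second argument frozen at a point near $\zeta=0$ where $q \neq 0$, which is exactly the paper's argument. The only difference is cosmetic: you justify the existence of such a point (via $q \not\equiv 0$ and the identity theorem), a step the paper leaves implicit.
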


\begin{proof}
Go back to the proof of Theorem~\ref{thm:basic_volterra}, where we found a radius $\delta > 0$ and a constant $C$ such that inequality~\eqref{eqn:sing-bound} holds whenever $|\zeta| < \delta$. The subsequent argument, which we used to show that $\lim_{\zeta \to 0} \chi = 0$, actually supports a stronger conclusion: it shows that $|\chi| \lesssim |\zeta|^\tau$ on the region $|\zeta| < \delta$. This implies that
\[ |\solproto| \lesssim \left|\frac{1}{p}\right|\,|\zeta|^\tau \]
on the region $|\zeta| < \delta$.

Choose some $\sigma > \tau$. By Condition~\eqref{cond:sing}, there is a radius $r < \delta$ for which
\[ \left|\frac{q(a')}{p(a)}\right| < \frac{\sigma}{|\zeta(a)|} \]
whenever $|\zeta(a)| < r$ and $|\zeta(a')| < r$. Choosing a point $b'$ with $|\zeta(b')| < r$ and $q(b') \neq 0$, we deduce that
\begin{align*}
|\solproto| & \lesssim \left|\frac{1}{q(b')}\right|\,\left|\frac{q(b')}{p}\right|\,|\zeta|^\tau \\
& \lesssim \left|\frac{1}{q(b')}\right|\,\frac{\sigma}{|\zeta|}\,|\zeta|^\tau \\
& \lesssim |\zeta|^{\tau-1}
\end{align*}
on the region $|\zeta| < r$, as desired.
\end{proof}
\begin{proposition}\label{prop:asymptotic at infinity}
Suppose $\hardpart$ satisfies {\em Conditions \eqref{cond:sing}} and \eqref{cond:diag-basic}. Then $\solproto$ belongs to the space $\singexpalg{\tau-1}(\domain)$.
\end{proposition}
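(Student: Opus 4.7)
The plan is to establish a uniform bound of the form $|\solproto(a)| \lesssim |\zeta(a)|^{\tau-1}\,e^{\Lambda|\zeta(a)|}$ on $\domain$ for some $\Lambda \in \R$, which places $\solproto$ in $\singexp{\tau-1}{\Lambda}(\domain) \subset \singexpalg{\tau-1}(\domain)$. Proposition~\ref{prop:asymptotic at zero} already gives the sharper bound $|\solproto| \lesssim |\zeta|^{\tau-1}$ on a ball $|\zeta| < r$, so the substantive work is on the complementary far region $|\zeta(a)| \ge r$.

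Write $\solproto = \chi / p$. For the denominator, I apply Condition~\eqref{cond:diag-basic} with $a'$ held fixed at a point $a_0 \in \domain$ where $q(a_0) \neq 0$; such an $a_0$ exists because Condition~\eqref{cond:sing} prevents $\hardker$ from being identically zero. Absorbing $|q(a_0)|^{-1}$ and $e^{\lambda_\Delta|\zeta(a_0)|}$ into the implicit constant yields
\[ \frac{1}{|p(a)|} \;\lesssim\; \frac{e^{\lambda_\Delta|\zeta(a)|}}{|\zeta(a)|}\,. \]
For the numerator, I exploit Condition~\eqref{cond:star}. Let $\tilde b_a$ denote the point on the segment from $\zeta = 0$ to $a$ with $|\zeta(\tilde b_a)| = r/2$; by star-shapedness this point lies in $\domain$. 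The proof of Theorem~\ref{thm:basic_volterra} establishes $|\chi(b')| \lesssim |\zeta(b')|^\tau$ whenever $|\zeta(b')| < \delta$, and since the radius $r$ was chosen with $r < \delta$ in the proof of Proposition~\ref{prop:asymptotic at zero}, this estimate applies at $b' = \tilde b_a$ and bounds $|\chi(\tilde b_a)|$ by a single constant, uniformly in the direction of $a$. By path independence on the simply connected domain,
\[ \chi(a) \;=\; \chi(\tilde b_a)\,\exp\!\left(-\int_{\tilde b_a}^a \frac{q}{p}\,d\zeta\right)\,, \]
and the diagonal case of Condition~\eqref{cond:diag-basic} gives $|q/p| \lesssim 1/|\zeta|$ along the radial segment from $\tilde b_a$ to $a$. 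Parametrizing by arc length $s \in [r/2, |\zeta(a)|]$ then yields $\int_{\tilde b_a}^a |q/p|\,|d\zeta| \le C\,\log(2|\zeta(a)|/r)$, hence $|\chi(a)| \lesssim |\zeta(a)|^C$ for some exponent $C$.

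Multiplying the two bounds gives $|\solproto(a)| \lesssim |\zeta(a)|^{C-1}\,e^{\lambda_\Delta|\zeta(a)|}$ on the far region. Since $|\zeta(a)| \ge r$ there, the polynomial discrepancy $|\zeta(a)|^{C-\tau}$ can be absorbed into an exponential $e^{(\Lambda - \lambda_\Delta)|\zeta(a)|}$ for $\Lambda$ chosen sufficiently large, leaving the desired bound $|\zeta(a)|^{\tau-1}\,e^{\Lambda|\zeta(a)|}$. The most delicate step is the uniform control of $|\chi(\tilde b_a)|$ as the direction of $a$ varies, and this uniformity is exactly what Condition~\eqref{cond:star} provides, by guaranteeing that the entire radial segment from $\zeta = 0$ to $a$ lies in $\domain$ so that $\tilde b_a$ is a legitimate evaluation point within the reach of the near-zero estimate. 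Everything else is bookkeeping about how much growth can be shifted between a polynomial prefactor and an exponential.
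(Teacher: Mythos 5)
Your proof is correct and follows essentially the same strategy as the paper's: a near/far split using Proposition~\ref{prop:asymptotic at zero} on $|\zeta| < r$, Condition~\eqref{cond:diag-basic} evaluated at a fixed second argument to bound $1/|p|$ by $|\zeta|^{-1}e^{\lambda_\Delta|\zeta|}$, and a bound on $\int|q/p|\,|d\zeta|$ to control the exponential factor $\chi$ on the far region. Your handling of the last step is a slight refinement: by re-basing the $\chi$ integral at $\tilde{b}_a$ on the radial segment and using the diagonal rate $|q/p| \lesssim 1/|\zeta|$, you get a logarithmic integral and hence polynomial growth for $\chi$, whereas the paper bounds $|q/p|$ by a constant on $|\zeta| \ge \delta$ and absorbs a linear integral into an exponential $e^{c_\Delta|\zeta|}$ --- either way the conclusion is the same, since the $1/|p|$ factor already carries an exponential that dominates.
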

\begin{proof}
We want to show that $|\solproto| \lesssim |\zeta|^{\tau-1} e^{\lambda_0|\zeta|}$ for some real constant $\lambda_0$.

By Proposition~\ref{prop:asymptotic at zero}, there is a radius $\delta > 0$ for which $|\solproto| \lesssim |\zeta|^{\tau-1}$ over the region $|\zeta| < \delta$. No matter which value of $\lambda_0$ we pick, $e^{\lambda_0|\zeta|}$ cannot get arbitrarily close to zero on a bounded region, so $|\solproto| \lesssim |\zeta|^{\tau-1} e^{\lambda_0|\zeta|}$ over the region $|\zeta| < \delta$.

By Condition~\eqref{cond:diag-basic}, we have
\begin{align*}
|\hardker(a, a')| & \lesssim \frac{1}{|\zeta(a)|} e^{\lambda_\Delta |\zeta(a) - \zeta(a')|} \\
& \lesssim \delta^{-1} e^{\lambda_\Delta |\zeta(a) - \zeta(a')|} \\
& \lesssim e^{\lambda_\Delta |\zeta(a) - \zeta(a')|}
\end{align*}
over all $a, a' \in \domain$ with $|\zeta(a)| \ge \delta$. The last bound means that for some $c_\Delta > 0$,
\[ |\hardker(a, a')| \le c_\Delta e^{\lambda_\Delta |\zeta(a) - \zeta(a')|} \]
whenever $|\zeta(a)| \ge \delta$. Applying this bound along the diagonal in $\domain^2$, we conclude that $|q/p| \le c_\Delta$ on the region $|\zeta| \ge \delta$. On the other hand, by fixing some arbitrary  $b' \in \domain$, we see that
\begin{align*}
\left|\frac{q(b')}{p(a)}\right| & \lesssim \frac{1}{|\zeta(a)|} e^{\lambda_\Delta|\zeta(a) - \zeta(b')|} \\
& \lesssim \frac{1}{|\zeta(a)|} e^{\lambda_\Delta(|\zeta(a)| + |\zeta(b')|)} \\
& \lesssim \frac{1}{|\zeta(a)|} e^{\lambda_\Delta|\zeta(a)|} e^{\lambda_\Delta|\zeta(b')|} \\
& \lesssim \frac{1}{|\zeta(a)|} e^{\lambda_\Delta|\zeta(a)|}
\end{align*}
over all $a \in \domain$ with $|\zeta(a)| \ge \delta$. Using both of the bounds we just found, we deduce that
\begin{align*}
|\solproto(a)| & = \left| \frac{1}{p(a)} \exp\left(-\int_{b}^{a}\frac{q}{p}\;d\zeta\right) \right| \\
& \le \left|\frac{1}{p(a)}\right| \exp\left(\int_{b}^{a}\left|\frac{q}{p}\right|\;d\zeta\right) \\
& \lesssim \frac{1}{|\zeta(a)|} e^{\lambda_\Delta|\zeta(a)|}\,e^{c_\Delta(|\zeta(a)| + |\zeta(b)|)} \\
& \lesssim \frac{1}{|\zeta(a)|} e^{(\lambda_\Delta + c_\Delta)|\zeta(a)|}\,e^{c_\Delta|\zeta(b)|}
\end{align*}
over all $a \in \domain$ with $|\zeta(a)| \ge \delta$. Setting $\lambda_0 = \lambda_\Delta + c_\Delta$, we have $|\solproto| \lesssim |\zeta|^{-1} e^{\lambda_0|\zeta|}$ over the region $|\zeta| \ge \delta$. Since we are assuming $\tau$ is real and positive, $|\zeta|^\tau$ cannot get arbitrarily close to zero on the region $|\zeta| > \delta$. It follows that $|\solproto| \lesssim |\zeta|^{\tau-1} e^{\lambda_0|\zeta|}$ over the region $|\zeta| \ge \delta$. Combining this with our earlier argument on the region $|\zeta| < \delta$, we get the desired result.
\end{proof}
\begin{proposition}\label{prop:better-proto-estimate}
Suppose $\hardpart$ satisfies {\em Conditions~\eqref{cond:reg-p}} and \eqref{cond:sing}. Then there is some $M\in\C$ for which
\[ \solproto \in M\zeta^{\tau-1} + O(|\zeta|^{\tau-1+\epsilon'}) \]
at $\zeta = 0$, where $\epsilon'=\min\{\epsilon, 1\}$.
\end{proposition}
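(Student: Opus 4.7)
The plan is to start from the factored form
\[ \solproto(a) = \frac{\chi(a)}{p(a)}\,, \qquad \chi(a) = \left(\frac{\zeta(a)}{\zeta(b)}\right)^{\!\tau} \exp\left(-\int_b^a h\,d\zeta\right)\,, \qquad h := \frac{q}{p} + \frac{\tau}{\zeta}\,, \]
which is derived in the proof of Theorem~\ref{thm:basic_volterra}. I would expand each of $1/p$ and $\chi$ to leading order near $\zeta = 0$ and then multiply.

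By Condition~\eqref{cond:reg-p}, $p = B\zeta(1 + g)$ with $g = O(|\zeta|^\epsilon)$. For $|\zeta|$ small enough that $|g| \le \tfrac12$, a one-term geometric expansion gives
\[ \frac{1}{p} = \frac{1}{B\zeta} + O(|\zeta|^{-1+\epsilon})\,. \]
For $\chi$, Condition~\eqref{cond:sing} supplies a constant $C$ and a radius $r > 0$ with $|h| \le C$ on $\{a \in \domain : |\zeta(a)| < r\}$. Condition~\eqref{cond:star} guarantees that the straight segment from $\zeta = 0$ to any such $a$ lies inside this region, so the improper integral $C_b := \int_b^0 h\,d\zeta$, taken along a path that eventually follows that straight segment into $\zeta = 0$, converges, and $\bigl|\int_0^a h\,d\zeta\bigr| \le C|\zeta(a)|$. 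Splitting $\int_b^a h\,d\zeta = C_b + \int_0^a h\,d\zeta$ and exponentiating yields
\[ \chi(a) = \frac{e^{-C_b}}{\zeta(b)^\tau}\,\zeta(a)^\tau + O(|\zeta(a)|^{\tau+1})\,. \]

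Multiplying the two expansions produces the leading term $M\zeta^{\tau-1}$ with $M = e^{-C_b}/(B\zeta(b)^\tau)$, together with cross terms of orders $|\zeta|^{\tau-1+\epsilon}$, $|\zeta|^\tau$, and $|\zeta|^{\tau+\epsilon}$. The first two dominate, and their combined bound is $O(|\zeta|^{\tau-1+\epsilon'})$ with $\epsilon' = \min\{\epsilon, 1\}$, exactly the claimed estimate.

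The one technically delicate point is the convergence of the improper integral $\int_b^0 h\,d\zeta$: this is where Condition~\eqref{cond:star} is essential, since it lets us choose an integration path that reaches $\zeta = 0$ along a straight segment on which $h$ is bounded by Condition~\eqref{cond:sing}. Every other step is a routine asymptotic manipulation of holomorphic factors.
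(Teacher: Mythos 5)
Your proposal is correct and follows essentially the same route as the paper's proof: extract the $\zeta^\tau$ factor, use the boundedness of $q/p+\tau/\zeta$ from Condition~\eqref{cond:sing} to show the improper integral from $\zeta=0$ converges and contributes a factor $1+O(|\zeta|)$, use Condition~\eqref{cond:reg-p} to expand the $1/p$ factor, and multiply, with your $M = e^{-C_b}/(B\,\zeta(b)^\tau)$ agreeing with the paper's constant. The only cosmetic difference is that you expand $1/p$ while the paper expands $\zeta/p$, which is the same bookkeeping.
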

\begin{proof}
It is enough to show that
\[ \solproto \in M\zeta^{\tau-1} \Big[1 + O\big(|\zeta|^{\epsilon'}\big)\Big] \]
at $\zeta = 0$. As in the proof of Theorem~\ref{thm:basic_volterra}, we compute
\begin{align*}
\solproto(a)&=\frac{1}{p(a)}\, \exp\left(-\int_b^a\frac{q}{p} d\zeta\;\right)\\
& = \frac{1}{p(a)}\,\frac{\zeta(a)^{\tau}}{\zeta(b)^{\tau}}\,\exp\left[-\int_b^a \left(\frac{q}{p}+\frac{\tau}{\zeta}\right) d\zeta\right]\\
& = \zeta(b)^{-\tau}\,\zeta(a)^{\tau-1}\,\frac{\zeta(a)}{p(a)} \exp\left[-\int_b^a\left(\frac{q}{p}+\frac{\tau}{\zeta}\right) d\zeta\right].
\end{align*}
Note that $\zeta(b)^{-\tau}$ is a constant, which will eventually be absorbed into $M$.

Condition \eqref{cond:reg-p} implies that
\[ \frac{\zeta}{p} \in B^{-1} + O\big(|\zeta|^{\epsilon}\big), \]
at $\zeta = 0$.

Now, consider the exponential factor. By Condition~\eqref{cond:sing}, there exists a constant $C$ and a neighborhood $\domain_\text{near}$ of $\zeta = 0$ for which
\[ \left| \frac{q}{p}+\frac{\tau}{\zeta} \right| < C \]
in $\domain_\text{near}$. It follows that the improper integral
\[ \eta(a) = \int_{\zeta = 0}^a \left(\frac{q}{p}+\frac{\tau}{\zeta}\right) d\zeta \]
converges for all $a \in \domain$, allowing us to write
\[ \exp\left[-\int_b^a\left(\frac{q}{p}+\frac{\tau}{\zeta}\right) d\zeta\right] = e^{\eta(b)} e^{-\eta(a)}. \]
Observing that $|\eta| \le C |\zeta|$ in $\near$, we conclude that
\[ \exp\left[-\int_b^a\left(\frac{q}{p}+\frac{\tau}{\zeta}\right) d\zeta\right] \in e^{\eta(b)} \Big[ 1 + O\big(|\zeta(a)|\big) \Big] \]
at $\zeta(a) = 0$.

Together, the arguments above show that
\begin{align*}
\solproto & \in \zeta(b)^{-\tau}\,\zeta^{\tau-1}\;\Big[ B^{-1} + O\big(|\zeta|^{\epsilon}\big) \Big]\;e^{\eta(b)} \Big[1 + O\big(|\zeta|\big) \Big] \\
& = \zeta(b)^{-\tau} B^{-1} e^{\eta(b)}\;\zeta^{\tau-1} \Big[ 1 + O\big(|\zeta|^{\epsilon}\big) \Big] \Big[1 + O\big(|\zeta|\big) \Big] \\
& = M\zeta^{\tau-1} \Big[ 1 + O\big(|\zeta|^{\epsilon}\big) + O\big(|\zeta|\big) \Big]
\end{align*}
at $\zeta = 0$, with $M = \zeta(b)^{-\tau} B^{-1} e^{\eta(b)}$. Observing that $O\big(|\zeta|^{\epsilon}\big) + O\big(|\zeta|\big) = O\big(|\zeta|^{\epsilon'}\big)$, we reach the desired result.
\end{proof}
\subsection{Showing that $\softpart$ makes the prototype solution less singular \\ \textit{(toward Theorem~\ref{thm:general_volterra})}}\label{sec:image under soft_part}
Now that we know the prototype solution $\solproto$ belongs to $\singexpalg{\tau-1}(\domain)$, we will show that $\softpart$ reduces the sharpness of its singularity at $\zeta = 0$, mapping it into $\singexpalg{\tau-1+\gamma}(\domain)$. This is a consequence of the general smoothing effect of $\softpart$, described in the following result.

\begin{proposition}\label{prop:smoothing}
Under {\em Condition~\eqref{cond:eps-lambda}}, the operator $\softpart$ maps
\[ \singexp{\sigma}{\Lambda}(\Omega) \to \singexp{\sigma+\gamma}{\Lambda}(\Omega) \]
continuously for all $\Lambda\geq \lambda_{\Delta}$ and $\sigma>-1$.
\end{proposition}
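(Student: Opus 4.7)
The plan is to bound $|[\softpart\varphi](a)|$ directly using Condition~\eqref{cond:eps-lambda} and the defining bound $|\varphi(a')| \le \|\varphi\|_{\sigma,\Lambda}\,|\zeta(a')|^{\sigma} e^{\Lambda|\zeta(a')|}$, then reduce the resulting integral to a Beta integral via the star-shaped parametrization $\zeta(a') = s\,\zeta(a)$ with $s \in [0,1]$, which is allowed by Condition~\eqref{cond:star}.

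First I would parametrize the integration path. Star-shapedness lets us write $[\softpart\varphi](a) = \zeta(a)\int_0^1 \softker(a,a'(s))\,\varphi(a'(s))\,ds$, where $a'(s)$ is the point on the segment from $\zeta=0$ to $a$ with $\zeta(a'(s)) = s\,\zeta(a)$. Along this segment, $|\zeta(a)-\zeta(a'(s))| = (1-s)|\zeta(a)|$ and $|\zeta(a'(s))| = s|\zeta(a)|$. Substituting the kernel bound from Condition~\eqref{cond:eps-lambda} and the norm bound on $\varphi$ into the absolute value of the integral, the two factors of $|\zeta(a)|$ (one from $d\zeta(a')$ and one from the kernel's denominator) cancel, and I am left with
\[
|[\softpart\varphi](a)| \;\lesssim\; \|\varphi\|_{\sigma,\Lambda}\,|\zeta(a)|^{\sigma+\gamma} \int_0^1 (1-s)^{\gamma}\,s^{\sigma}\,e^{[\lambda_\Delta(1-s)+\Lambda s]\,|\zeta(a)|}\,ds.
\]

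The key observation is that the exponent inside the integral is the convex combination $\lambda_\Delta(1-s) + \Lambda s$, which under the assumption $\Lambda \ge \lambda_\Delta$ is bounded above by $\Lambda$ for all $s \in [0,1]$. Pulling $e^{\Lambda|\zeta(a)|}$ out of the integral leaves the Beta integral $\int_0^1 (1-s)^{\gamma} s^{\sigma}\,ds = B(\sigma+1,\gamma+1)$, which is finite exactly because $\sigma > -1$ (and $\gamma > 0$). Multiplying through by $|\zeta(a)|^{-(\sigma+\gamma)}\,e^{-\Lambda|\zeta(a)|}$ and taking the supremum over $\Omega$ yields
\[
\|\softpart\varphi\|_{\sigma+\gamma,\Lambda} \;\lesssim\; B(\sigma+1,\gamma+1)\,\|\varphi\|_{\sigma,\Lambda},
\]
which is exactly the continuity statement. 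Holomorphicity of $\softpart\varphi$ comes for free from Definition~\ref{defn:volterra}.

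There is no serious obstacle, but the argument hinges on three separate features being used together at the right moments: star-shapedness to get a canonical parametrization, the inequality $\Lambda \ge \lambda_\Delta$ to keep the exponential weight controlled by $e^{\Lambda|\zeta(a)|}$ rather than something larger, and the lower bound $\sigma > -1$ to guarantee the Beta integral converges at $s=0$. If any of those hypotheses is dropped the estimate breaks, so the main care is to present these three ingredients transparently. I would state the proof in exactly that order: parametrize, estimate, identify the Beta integral.
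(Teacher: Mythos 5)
Your proposal is correct and follows essentially the same route as the paper's proof: bound the integrand via Condition~\eqref{cond:eps-lambda} and the norm of $\varphi$, parametrize the straight path $\zeta(a') = s\,\zeta(a)$, control the exponential using $\Lambda \ge \lambda_\Delta$, and recognize Euler's beta integral, which converges precisely because $\sigma > -1$ and $\gamma > 0$. Your convex-combination observation $\lambda_\Delta(1-s)+\Lambda s \le \Lambda$ is just a slightly tidier packaging of the paper's two-step factoring of the exponentials.
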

\begin{rmk}
We assume $\gamma > 0$, but this result holds even under the weaker assumption that $\gamma > -1$. We will take advantage of this in Section~\ref{sec:L-int-op}.
\end{rmk}
\begin{proof}[Proof of Proposition~\ref{prop:smoothing}]
For any function $f\in\singexp{\sigma}{\Lambda}(\domain)$,
\begin{revtwo}
\begin{align*}
& |\zeta(a)|^{-(\sigma+\gamma)} \, e^{-\Lambda |\zeta(a)|} \, \Big \vert \big[ \softpart f\big](a)\Big\vert \\
& \qquad \leq |\zeta(a)|^{-(\sigma+\gamma)}\, e^{-\Lambda |\zeta(a)|} \int_{\zeta(a')=0}^a |\softker(a,a')|\, |f(a')| \, |d\zeta(a')| \\
& \qquad \leq |\zeta(a)|^{-(\sigma+\gamma)}\, e^{-\Lambda |\zeta(a)|} \int_{\zeta(a')=0}^a |\softker(a,a')|\, |\zeta(a')|^{\sigma}\, e^{\Lambda |\zeta(a')|} \|f\|_{\sigma,\Lambda} \, |d\zeta(a')|\,.
\end{align*}
\end{revtwo}
By Condition~\eqref{cond:eps-lambda},
\begin{revtwo}
\begin{align*}
& \int_{\zeta(a')=0}^a |\softker(a,a')|\, |\zeta(a')|^{\sigma}\, e^{\Lambda |\zeta(a')|} \, |d\zeta(a')| \\
& \qquad \lesssim \int_{\zeta(a')=0}^a \frac{|\zeta(a)-\zeta(a')|^\gamma}{|\zeta(a)|} e^{\lambda_\Delta |\zeta(a)-\zeta(a')|} |\zeta(a')|^{\sigma}\, e^{\Lambda|\zeta(a')|}\;|d\zeta(a')|\\
& \qquad \lesssim |\zeta(a)|^{\gamma+\sigma} \int_{0}^1 (1-t)^\gamma e^{\lambda_\Delta |\zeta(a)|(1-t)} t^{\sigma}\, e^{\Lambda|\zeta(a)| t}\;dt\\
& \qquad \lesssim |\zeta(a)|^{\gamma+\sigma}\, e^{\lambda_\Delta |\zeta(a)|}\,  \int_{0}^1 (1-t)^\gamma  t^{\sigma}\,e^{(\Lambda-\lambda_\Delta)|\zeta(a)| t}\;dt\\
& \qquad \lesssim |\zeta(a)|^{\gamma+\sigma}\, e^{\lambda_\Delta |\zeta(a)|}\,e^{(\Lambda-\lambda_\Delta)|\zeta(a)|}\int_{0}^1 (1-t)^\gamma  t^{\sigma}\;dt.
\end{align*}
\end{revtwo}
The last step takes advantage of the assumption that $\Lambda \ge \lambda_\Delta$. The integral on the last line is Euler's beta integral~\cite[equation~(5.12.1)]{dlmf}, so
\begin{revtwo}
\[ \int_{\zeta(a')=0}^a |\softker(a, a')|\, |\zeta(a')|^{\sigma}\, e^{\Lambda |\zeta(a')|} \, |d\zeta(a')| \lesssim |\zeta(a)|^{\gamma+\sigma}\, e^{\Lambda |\zeta(a)|} \frac{\Gamma(\gamma+1)\Gamma(\sigma+1)}{\Gamma(\sigma+\gamma+2)}\,. \]
\end{revtwo}
Since $\gamma > 0$ and $\sigma > -1$, the gamma functions are well-defined.\footnote{We could even weaken the constraint on $\gamma$, allowing any $\gamma > -1$.} Rearranging to get
\begin{revtwo}
\[ |\zeta(a)|^{-(\gamma+\sigma)}\, e^{-\Lambda |\zeta(a)|} \int_{\zeta(a')=0}^a |\softker(a,a')|\, |\zeta(a')|^{\sigma}\, e^{\Lambda |\zeta(a')|} \, |d\zeta(a')| \lesssim \frac{\Gamma(\gamma+1)\Gamma(\sigma+1)}{\Gamma(\sigma+\gamma+2)}\,, \]    
\end{revtwo}
we conclude that $|\zeta(a)|^{-\sigma-\gamma} \, e^{-\Lambda |\zeta(a)|} \, \Big \vert \big[ \softpart f\big](a)\Big\vert$ is uniformly bounded in $\domain$. 
\end{proof}
\begin{corollary}\label{cor:pertub_f0}
Consider the prototype solution $\solproto$ from equation~\eqref{eqn:test_solution}. If $\softpart$ satisfies {\em Condition \eqref{cond:eps-lambda}}, then $\softpart \solproto$ belongs to $\singexpalg{\tau-1+\gamma}(\domain)$.
\end{corollary}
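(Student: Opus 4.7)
The plan is to chain together two facts already established in the paper. By Theorem~\ref{thm:proto-growth} (more precisely, Proposition~\ref{prop:asymptotic at infinity}), we already know that $\solproto$ belongs to $\singexpalg{\tau-1}(\domain)$, so there exists some $\Lambda_0 \in \R$ with $\solproto \in \singexp{\tau-1}{\Lambda_0}(\domain)$. On the other hand, Proposition~\ref{prop:smoothing} gives the continuous mapping property $\softpart \colon \singexp{\sigma}{\Lambda}(\domain) \to \singexp{\sigma+\gamma}{\Lambda}(\domain)$ whenever $\sigma > -1$ and $\Lambda \geq \lambda_\Delta$. The proof is essentially a matter of matching these indices.

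First I would set $\sigma = \tau - 1$. This is admissible, since Condition~\eqref{cond:sing} requires $\tau > 0$ and therefore $\sigma > -1$. To satisfy the condition $\Lambda \geq \lambda_\Delta$ on the exponential growth rate, I would pick $\Lambda = \max(\Lambda_0, \lambda_\Delta)$ and invoke Proposition~\ref{prop:inclus-ge-exp} to obtain a continuous inclusion $\singexp{\tau-1}{\Lambda_0}(\domain) \hookrightarrow \singexp{\tau-1}{\Lambda}(\domain)$. In particular, $\solproto$ still lies in $\singexp{\tau-1}{\Lambda}(\domain)$. Applying Proposition~\ref{prop:smoothing} then gives $\softpart \solproto \in \singexp{\tau-1+\gamma}{\Lambda}(\domain)$, and Definition~\ref{def:exp-top} immediately places this in $\singexpalg{\tau-1+\gamma}(\domain)$, as required.

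There is no real obstacle here; the corollary is a bookkeeping consequence of the regularity of $\solproto$ and the smoothing effect of $\softpart$. The only minor subtlety is ensuring that the growth rate at infinity given by Proposition~\ref{prop:asymptotic at infinity} is compatible with the lower bound $\lambda_\Delta$ required by Proposition~\ref{prop:smoothing}, which is handled transparently by enlarging $\Lambda$ via Proposition~\ref{prop:inclus-ge-exp}.
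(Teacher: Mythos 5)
Your proposal is correct and follows essentially the same route as the paper: invoke Theorem~\ref{thm:proto-growth} to place $\solproto$ in some $\singexp{\tau-1}{\Lambda_0}(\domain)$, enlarge $\Lambda$ to exceed $\lambda_\Delta$ (the paper does this implicitly, you via Proposition~\ref{prop:inclus-ge-exp}), and apply Proposition~\ref{prop:smoothing} with $\sigma = \tau-1 > -1$. No gaps; your explicit handling of the $\Lambda \ge \lambda_\Delta$ requirement is just a slightly more detailed version of the paper's one-line choice of $\Lambda$.
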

\begin{proof}
We know from Theorem~\ref{thm:proto-growth} that $\solproto$ belongs to $\singexpalg{\tau-1}(\domain)$. Choose a constant $\Lambda \ge \lambda_\Delta$ big enough that $\solproto$ is in $\singexp{\tau-1}{\Lambda}(\domain)$. Since we are assuming $\tau$ is positive, we can apply Proposition~\ref{prop:smoothing}, concluding that $\softpart \solproto$ belongs to $\singexp{\tau-1+\gamma}{\Lambda}(\domain)$.
\end{proof}
\subsection{Showing that $\volterra$ shrinks less singular functions \\ \textit{(toward Lemma~\ref{lem:perturbed_volterra})}}\label{sec:V is a contraction}
\subsubsection{Overview}
In this section, we will prove the following proposition. 

\begin{proposition}\label{prop:get-contraction}
Suppose that $\hardpart$ satisfies {\em Conditions \eqref{cond:sing}} and \eqref{cond:diag-basic}, and $\softpart$ satisfies {\em Condition \eqref{cond:eps-lambda}}. Then, for each $\rho > \tau$, there is some $\Lambda_\text{\rm lower} \in \R$ such that
\[\volterra\colon\singexp{\rho-1}{\Lambda}(\domain)\to\singexp{\rho-1}{\Lambda}(\domain)\]
is a contraction for all $\Lambda > \Lambda_\text{\rm lower}$.
\end{proposition}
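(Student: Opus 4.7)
The plan is to bound the operator norm of $\volterra = \hardpart + \softpart$ on $\singexp{\rho-1}{\Lambda}(\domain)$ and show that it is strictly less than $1$ once $\Lambda$ is large enough; since $\volterra$ is linear, such an estimate immediately gives the contraction property. In all calculations I would parametrize the straight integration path from $\zeta = 0$ to $a$ (available by Condition~\eqref{cond:star}) by points $a'(t)$ with $\zeta(a'(t)) = t\zeta(a)$, $t \in [0,1]$, write $s := |\zeta(a)|$, and use the pointwise bound $|f(a'(t))| \le (ts)^{\rho-1} e^{\Lambda t s} \|f\|_{\rho-1, \Lambda}$.

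For $\hardpart$, first fix some $\sigma \in (\tau, \rho)$. By Condition~\eqref{cond:sing}, there is $\delta > 0$ such that $|\hardker(a, a')| < \sigma/|\zeta(a)|$ whenever $|\zeta(a)|, |\zeta(a')| < \delta$. For $a$ in the near region $\{s < \delta\}$, the entire path satisfies $|\zeta(a'(t))| = ts < \delta$, so this sharp bound applies throughout, and a direct computation yields
\[ |\zeta(a)|^{-(\rho-1)} e^{-\Lambda s} |[\hardpart f](a)| \le \sigma \|f\|_{\rho-1, \Lambda} \int_0^1 t^{\rho-1} e^{-\Lambda(1-t)s}\,dt \le \frac{\sigma}{\rho} \|f\|_{\rho-1, \Lambda}, \]
a contraction factor since $\sigma/\rho < 1$. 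For $a$ in the far region $\{s \ge \delta\}$, Condition~\eqref{cond:diag-basic} produces instead the integral $\int_0^1 t^{\rho-1} e^{-(\Lambda-\lambda_\Delta)(1-t)s}\,dt$; splitting $[0,1]$ near $t = 1$ to separate the exponentially small bulk from the boundary layer (which compares to $\int_0^\infty e^{-uw}\,dw = 1/u$) shows this is $O\big(1/((\Lambda-\lambda_\Delta)\delta)\big)$, which vanishes as $\Lambda \to \infty$.

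For $\softpart$, Condition~\eqref{cond:eps-lambda} gives the parallel estimate
\[ |\zeta(a)|^{-(\rho-1)} e^{-\Lambda s} |[\softpart f](a)| \lesssim \|f\|_{\rho-1, \Lambda}\; s^\gamma \int_0^1 (1-t)^\gamma t^{\rho-1} e^{-(\Lambda-\lambda_\Delta)(1-t)s}\,dt. \]
The main obstacle is that the factor $s^\gamma$ is unbounded in $s$, so Proposition~\ref{prop:smoothing} alone is insufficient: converting its bound from the $\singexp{\rho-1+\gamma}{\Lambda}$ norm back to the $\singexp{\rho-1}{\Lambda}$ norm costs a factor of $|\zeta|^\gamma$, destroying uniform boundedness. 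The resolution is to interpolate between two bounds on the integral: the trivial bound $B(\gamma+1, \rho)$, yielding $s^\gamma B(\gamma+1, \rho)$; and the asymptotic bound $O\big(u^{-(\gamma+1)}\big)$ for large $u = (\Lambda-\lambda_\Delta)s$, yielding $O\big(s^{-1}(\Lambda-\lambda_\Delta)^{-(\gamma+1)}\big)$. Using the first bound for $s \lesssim (\Lambda-\lambda_\Delta)^{-1}$ and the second for $s \gtrsim (\Lambda-\lambda_\Delta)^{-1}$ (the two match at the crossover $s \sim (\Lambda-\lambda_\Delta)^{-1}$), I get a uniform bound of order $(\Lambda-\lambda_\Delta)^{-\gamma}$ that vanishes as $\Lambda \to \infty$. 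Putting everything together, the operator norm of $\volterra$ on $\singexp{\rho-1}{\Lambda}(\domain)$ is at most $\sigma/\rho + o(1)$, strictly less than $1$ for all $\Lambda$ above some $\Lambda_\text{lower}$.
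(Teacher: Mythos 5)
Your argument is correct, and its skeleton (near/far split of $\domain$ at a radius $\delta$, the straight path $\zeta = t\zeta(a)$, the factor $\sigma/\rho < 1$ near the origin, and damping by large $\Lambda$ away from it) matches the paper's. Where you genuinely diverge is in how the perturbation is handled. The paper never splits the operator: it combines the kernels into $\kerwhole = \hardker + \softker$ and proves two kernel-level bounds (Propositions~\ref{prop:whole-ker-near-bound} and \ref{prop:whole-ker-far-bound}), the key trick being to absorb the polynomial factor $|\zeta(a)-\zeta(a')|^\gamma$ from Condition~\eqref{cond:eps-lambda} into the exponential by replacing $\lambda_\Delta$ with some $\lambda > \lambda_\Delta$; after that, only the single elementary integral $\int_0^1 t^{\rho-1} e^{-(\Lambda-\lambda)\delta(1-t)}\,dt$ needs to be estimated. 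You instead bound $\hardpart$ and $\softpart$ separately, keep the $(1-t)^\gamma$ weight in the $\softpart$ integral, and control the resulting $s^\gamma \int_0^1 (1-t)^\gamma t^{\rho-1} e^{-(\Lambda-\lambda_\Delta)(1-t)s}\,dt$ by interpolating between the beta-function bound for small $s$ and the Watson-type $O\big(u^{-(\gamma+1)}\big)$ decay for large $u = (\Lambda-\lambda_\Delta)s$; this is sound (the case $\rho<1$ is handled by the same split near the endpoint that you already use for $\hardpart$, and your correct diagnosis that Proposition~\ref{prop:smoothing} alone cannot give the contraction is exactly why the paper does not invoke it here either). The trade-off: the paper's kernel-combination route needs less delicate integral asymptotics and no interpolation in $s$, while yours avoids the two auxiliary kernel propositions and yields a more quantitative conclusion, an operator-norm bound of the form $\sigma/\rho + O\big((\Lambda-\lambda_\Delta)^{-\gamma}\big) + O\big(((\Lambda-\lambda_\Delta)\delta)^{-1}\big)$, which makes explicit the rate at which $\softpart$'s contribution disappears as $\Lambda \to \infty$.
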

\begin{rmk}
The threshold $\Lambda_\text{\rm lower}$ is always greater than $\lambda_\Delta$, and often much greater.
\end{rmk}
First, pick some $\sigma \in (\tau, \rho)$. Proposition~\ref{prop:whole-ker-near-bound}, which we will state and prove in Section~\ref{sec:bounds on k}, shows that there is a neighborhood $\near \subset \domain$ of $\zeta = 0$ with the property that
\begin{equation}\label{near-limit}
|k(a, a')| \le \frac{\sigma}{|\zeta(a)|}
\end{equation}
for all $a, a' \in \near$. We take $\near$ to be the part of $\domain$ where $|\zeta| < \delta$, for some small enough positive radius $\delta$. Complementarily, let $\far$ be the part of $\domain$ where $\delta \le |\zeta|$.

Take any function $\varphi \in \singexp{\rho}{\Lambda}(\domain)$. In Section~\ref{near-bound}, we will bound $|\zeta|^{-(\rho-1)}\,e^{-\Lambda|\zeta|}\,|\volterra\varphi|$ by $\tfrac{\sigma}{\rho} \|\varphi\|_{\rho-1, \Lambda}$ on $\near$. In Section~\ref{far-bound}, we will see that by making $\Lambda$ big enough, we can bound $|\zeta|^{-(\rho-1)}\,e^{-\Lambda|\zeta|}\,|\volterra\varphi|$ by an arbitrarily small constant multiple of $\|\varphi\|_{\rho-1, \Lambda}$ on $\far$. Together, these results will show that $\|\volterra \varphi\|_{\rho-1, \Lambda} \le \tfrac{\sigma}{\rho} \|\varphi\|_{\rho-1, \Lambda}$ when $\Lambda$ is large enough. Since we set $\sigma < \rho$, this will prove Proposition~\ref{prop:get-contraction}.
\subsubsection{Bounds on the perturbed kernel}\label{sec:bounds on k}
The conditions on $\hardker$ and $\softker$ described in Sections \ref{setting:basic} and \ref{setting:perturbed} can be combined into convenient bounds on the kernel $\kerwhole = \hardker + \softker$ of $\volterra$. One bound, which works when both arguments of $\kerwhole$ are close to $\zeta = 0$, will be used in Section~\ref{near-bound}.
\begin{proposition}\label{prop:whole-ker-near-bound}
Suppose $\hardpart$ satisfies Condition~\eqref{cond:sing}, and $\softpart$ satisfies Condition~\eqref{cond:eps-lambda}. Then, for any $\sigma > \tau$, there is a neighborhood $\Sigma \subset \Omega$ of $\zeta = 0$ with the property that
\[ |\kerwhole(a, a')| \le \frac{\sigma}{|\zeta(a)|} \]
for all $a, a' \in \Sigma$.
\end{proposition}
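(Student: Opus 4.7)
The plan is to bound $\hardker$ and $\softker$ separately on a small neighborhood of $\zeta=0$, then combine the two bounds. The key observation is that Condition~\eqref{cond:sing} already gives us exactly the form we want for $\hardker$, with a constant arbitrarily close to $\tau$, and Condition~\eqref{cond:eps-lambda} provides a bound on $\softker$ whose factor $|\zeta(a)-\zeta(a')|^\gamma$ shrinks to zero as both arguments approach $\zeta=0$. So the perturbation's contribution can be absorbed into a small slack between $\tau$ and $\sigma$.

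Concretely, I would first split the slack: choose $\sigma'$ with $\tau < \sigma' < \sigma$. By Condition~\eqref{cond:sing}, there is a neighborhood $\Sigma_0$ of $\bigl(\zeta(a),\zeta(a')\bigr) = (0,0)$ in $\Omega^2$ on which $|\hardker(a,a')| < \sigma'/|\zeta(a)|$. Next, by Condition~\eqref{cond:eps-lambda}, there is a constant $C$ with
\[ |\softker(a,a')| \le C\,\frac{|\zeta(a)-\zeta(a')|^\gamma}{|\zeta(a)|}\,e^{\lambda_\Delta|\zeta(a)-\zeta(a')|} \]
for all $a,a' \in \Omega$. Since $\gamma > 0$, the quantity $|\zeta(a)-\zeta(a')|^\gamma e^{\lambda_\Delta|\zeta(a)-\zeta(a')|}$ tends to zero as $\zeta(a), \zeta(a') \to 0$, so there is a radius $\delta > 0$ such that $C\,|\zeta(a)-\zeta(a')|^\gamma e^{\lambda_\Delta|\zeta(a)-\zeta(a')|} \le \sigma - \sigma'$ whenever $|\zeta(a)|, |\zeta(a')| < \delta$. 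On this region,
\[ |\softker(a,a')| \le \frac{\sigma-\sigma'}{|\zeta(a)|}. \]

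Finally, take $\Sigma$ to be the intersection of $\Omega$ with the disk $|\zeta| < \delta$, further shrunk if necessary so that $\Sigma \times \Sigma \subset \Sigma_0$. Then for all $a, a' \in \Sigma$, the triangle inequality gives
\[ |\kerwhole(a,a')| \le |\hardker(a,a')| + |\softker(a,a')| \le \frac{\sigma'}{|\zeta(a)|} + \frac{\sigma-\sigma'}{|\zeta(a)|} = \frac{\sigma}{|\zeta(a)|}, \]
as desired. There is no real obstacle here; the only slightly delicate point is making sure the neighborhood $\Sigma_0$ supplied by Condition~\eqref{cond:sing}, which lives in $\Omega^2$, contains a product neighborhood of the form $\Sigma \times \Sigma$, and this is automatic once $\Sigma$ is taken to be a small enough disk-intersection.
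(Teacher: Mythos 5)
Your proof is correct and follows essentially the same route as the paper: split the slack $\sigma - \tau$, use the second part of Condition~(\textsc{sing}) to bound $\hardker$ by $\sigma'/|\zeta(a)|$ near the origin, and use the factor $|\zeta(a)-\zeta(a')|^\gamma$ (with $\gamma>0$) in Condition~(\textsc{diag}$_\star$) to make the contribution of $\softker$ smaller than $(\sigma-\sigma')/|\zeta(a)|$ on a small enough disk-intersection. The paper's proof is the same argument, phrased with an explicit radius $\delta$ and the crude bound $|\zeta(a)-\zeta(a')|\le 2\delta$.
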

\begin{proof}
Choose some $\sigma_0 \in (\tau, \sigma)$. From Condition~\eqref{cond:sing}, by making $\delta > 0$ small enough, we can ensure that
\[ |\hardker(a, a')| \le \frac{\sigma_0}{|\zeta(a)|} \]
whenever $|\zeta(a)| < \delta$ and $|\zeta(a')| < \delta$.

Condition~\eqref{cond:eps-lambda} implies that
\[ | \softker(a, a') | \lesssim\frac{|\zeta(a)-\zeta(a')|^\gamma}{|\zeta(a)|}\,e^{\lambda_\Delta|\zeta(a)-\zeta(a')|}\]
over all $a, a' \in \domain$, which means that
\[ | \softker(a, a') | \lesssim\frac{(2\delta)^\gamma}{|\zeta(a)|}\,e^{\lambda_\Delta(2\delta)}\]
over all $a, a' \in \domain$ with $|\zeta(a)| < \delta$ and $|\zeta(a')| < \delta$. Combining this conclusion with the previous one, we get the desired result.
\end{proof}
Another bound, which works when the first argument of $\kerwhole$ is kept away from $\zeta = 0$, will be used in Section~\ref{far-bound}.
\begin{proposition}\label{prop:whole-ker-far-bound}
Suppose $\hardpart$ satisfies {\em Condition~\eqref{cond:diag-basic}}, and $\softpart$ satisfies {\em Condition~\eqref{cond:eps-lambda}}. Choose a subset $\far \subset \domain$ that does not touch $\zeta = 0$. Then, for any $\lambda > \lambda_\Delta$, we have
\[ |\kerwhole(a,a')| \lesssim e^{\lambda |\zeta(a)-\zeta(a')|} \]
over all $a \in \far$ and $a' \in \domain$.
\end{proposition}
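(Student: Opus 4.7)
The plan is to bound $|\kerwhole(a,a')| = |\hardker(a,a') + \softker(a,a')|$ by the triangle inequality and control each piece separately, exploiting the assumption that $\far$ stays a definite distance away from $\zeta = 0$. Since $\far$ does not touch $\zeta = 0$, there is some $\delta > 0$ such that $|\zeta(a)| \ge \delta$ for all $a \in \far$; this lets us replace the factor $1/|\zeta(a)|$ that appears in both Conditions~\eqref{cond:diag-basic} and~\eqref{cond:eps-lambda} by the constant $1/\delta$.

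First I would handle $\hardker$. Condition~\eqref{cond:diag-basic} gives
\[ |\hardker(a,a')| \lesssim \frac{1}{|\zeta(a)|}\,e^{\lambda_\Delta |\zeta(a) - \zeta(a')|} \lesssim e^{\lambda_\Delta |\zeta(a) - \zeta(a')|} \]
for $a \in \far$ and $a' \in \domain$. Since $\lambda > \lambda_\Delta$, this is at most a constant times $e^{\lambda |\zeta(a) - \zeta(a')|}$.

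Next I would handle $\softker$. Condition~\eqref{cond:eps-lambda} gives
\[ |\softker(a,a')| \lesssim \frac{|\zeta(a) - \zeta(a')|^\gamma}{|\zeta(a)|}\,e^{\lambda_\Delta |\zeta(a) - \zeta(a')|} \lesssim |\zeta(a) - \zeta(a')|^\gamma\,e^{\lambda_\Delta |\zeta(a) - \zeta(a')|} \]
for $a \in \far$. The one step that needs a moment's thought is absorbing the polynomial prefactor $|\zeta(a)-\zeta(a')|^\gamma$ into the exponential. This is routine: the function $x \mapsto x^\gamma\,e^{-(\lambda - \lambda_\Delta)x}$ is bounded on $[0, \infty)$ because $\lambda - \lambda_\Delta > 0$, so $x^\gamma\,e^{\lambda_\Delta x} \lesssim e^{\lambda x}$ over $x \ge 0$. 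Applying this with $x = |\zeta(a) - \zeta(a')|$ yields $|\softker(a,a')| \lesssim e^{\lambda|\zeta(a) - \zeta(a')|}$.

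Adding the two bounds gives the desired estimate on $\kerwhole$. There is no real obstacle here; the only point requiring care is the polynomial-absorption step for $\softker$, which is why the statement asks for a strict inequality $\lambda > \lambda_\Delta$ rather than $\lambda \ge \lambda_\Delta$.
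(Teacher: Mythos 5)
Your proof is correct and follows essentially the same route as the paper's: bound $|\kerwhole| \le |\hardker| + |\softker|$, use $|\zeta(a)| \ge \delta$ on $\far$ to discard the $1/|\zeta(a)|$ factor, and absorb the power $|\zeta(a)-\zeta(a')|^\gamma$ into the exponential using $\lambda > \lambda_\Delta$ (the paper does this for $1 + |\zeta(a)-\zeta(a')|^\gamma$ after summing the two bounds, which is only a cosmetic difference from your term-by-term treatment). No gaps.
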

\begin{proof}
Find a radius $\delta > 0$ with $|\zeta| \ge \delta$ on $\far$, and choose some $\lambda > \lambda_\Delta$. Conditions \eqref{cond:diag-basic} and \eqref{cond:eps-lambda} tell us that
\begin{align*}
|\kerwhole(a,a')|&\leq |\hardker(a,a')| + |\softker(a,a')|\\
&\lesssim \frac{1}{|\zeta(a)|} e^{\lambda_\Delta |\zeta(a)-\zeta(a')|} + \frac{|\zeta(a)-\zeta(a')|^\gamma}{|\zeta(a)|}\,e^{\lambda_\Delta|\zeta(a)-\zeta(a')|}\\
&\lesssim \delta^{-1} \big(1 + |\zeta(a)-\zeta(a')|^\gamma \big) \, e^{\lambda_\Delta|\zeta(a)-\zeta(a')|}
\end{align*}
over all $a \in \far$ and $a' \in \domain$. Since
\[ 1 + |\zeta(a)-\zeta(a')|^\gamma \]
grows polynomially with respect to $|\zeta(a)-\zeta(a')|$, we can bound it with any growing exponential function of $|\zeta(a)-\zeta(a')|$. In particular,
\[ 1 + |\zeta(a)-\zeta(a')|^\gamma \lesssim e^{(\lambda - \lambda_\Delta) |\zeta(a)-\zeta(a')|} \]
over all $a, a' \in \domain$. It follows that
\[ |\kerwhole(a,a')| \lesssim e^{(\lambda - \lambda_\Delta) |\zeta(a)-\zeta(a')|} \, e^{\lambda_\Delta|\zeta(a)-\zeta(a')|} \]
over all $a \in \far$ and $a' \in \domain$. This simplifies to the desired result.
\end{proof}
\subsubsection{First steps toward showing that $\volterra$ is a contraction}\label{first-steps}
The first steps of our calculation are the same throughout $\domain$. For each $a \in \domain$, we have
\begin{revtwo}
\[ |\zeta(a)|^{-(\rho-1)}\,e^{-\Lambda|\zeta(a)|}\,\big|[\volterra\varphi](a)\big| \le |\zeta(a)|^{-(\rho-1)}\,e^{-\Lambda|\zeta(a)|}  \int_{\zeta(a') = 0}^a |k(a, a')\,\varphi(a')\;d\zeta(a')| \]
\end{revtwo}
for any choice of integration path. The norm on $\singexp{\rho-1}{\Lambda}(\domain)$ is designed to give the bound $|\varphi| \le |\zeta|^{\rho-1}\,e^{\Lambda |\zeta|}\,\|\varphi\|_{\rho-1, \Lambda}$, so
\begin{revtwo}
\begin{align*}
&|\zeta(a)|^{-(\rho-1)}\,e^{-\Lambda|\zeta(a)|}\,\big|[\volterra\varphi](a)\big| \\
& \qquad \le |\zeta(a)|^{-(\rho-1)}\,e^{-\Lambda|\zeta(a)|}  \int_{\zeta(a') = 0}^a |k(a, a')|\,|\zeta(a')|^{\rho-1}\,e^{\Lambda |\zeta(a')|}\,\|\varphi\|_{\rho-1, \Lambda}\;|d\zeta(a')| \\
& \qquad = \|\varphi\|_{\rho-1, \Lambda}  \int_{\zeta(a') = 0}^a |k(a, a')|\,\left|\frac{\zeta(a')}{\zeta(a)}\right|^{\rho-1}\,e^{-\Lambda(|\zeta(a)| - |\zeta(a')|)}\;|d\zeta(a')|
\end{align*}
\end{revtwo}
What we do next depends on whether $a$ is in $\near$ or $\far$.
\subsubsection{Near the origin}\label{near-bound}
Suppose that $a \in \near$. Then inequality~\eqref{near-limit} gives
\begin{align*}
|\zeta(a)|^{-(\rho-1)}\,e^{-\Lambda|\zeta(a)|}\,\big|[\volterra\varphi](a)\big| & \le
\|\varphi\|_{\rho-1, \Lambda}  \int_{\zeta = 0}^a \frac{\sigma}{|\zeta(a)|}\,\left|\frac{\zeta}{\zeta(a)}\right|^{\rho-1}\,e^{-\Lambda(|\zeta(a)| - |\zeta|)}\;|d\zeta| \\
& \le \sigma \|\varphi\|_{\rho-1, \Lambda}  \int_{\zeta = 0}^a \left|\frac{\zeta}{\zeta(a)}\right|^{\rho-1}\,e^{-\Lambda(|\zeta(a)| - |\zeta|)}\;\left|\frac{d\zeta}{\zeta(a)}\right|
\end{align*}
for any integration path that stays within $\near$. Noting that $\near$ is star-shaped by construction, we use the straight path $\zeta = t \zeta(a)$, with $t \in (0, 1]$:
\begin{align*}
|\zeta(a)|^{-(\rho-1)}\,e^{-\Lambda|\zeta(a)|}\,\big|[\volterra\varphi](a)\big| & \le \sigma \|\varphi\|_{\rho-1, \Lambda} \int_0^1 t^{\rho-1}\,e^{-\Lambda |\zeta(a)|(1 - t)}\;dt \\
& \le \sigma \|\varphi\|_{\rho-1, \Lambda} \int_0^1 t^{\rho-1}\;dt \\
& = \frac{\sigma}{\rho} \|\varphi\|_{\rho-1, \Lambda}.
\end{align*}
Since we set $\sigma < \rho$, this brings us halfway to proving Proposition~\ref{prop:get-contraction}.
\subsubsection{Away from the origin}\label{far-bound}
Going back to the end of Section~\ref{first-steps}, suppose that $a \in \far$. Choose some $\lambda > \lambda_\Delta$. By Proposition~\ref{prop:whole-ker-far-bound} from Section~\ref{sec:bounds on k}, there is some $M\in\R_{>0}$ for which
\[ |k(a, a')| \le M e^{\lambda |\zeta(a) - \zeta(a')|} \]
for all $a \in \far$ and $a' \in \Omega$. This bound implies that
\[ |\zeta(a)|^{-(\rho-1)}\,e^{-\Lambda|\zeta(a)|}\,\big|[\volterra\varphi](a)\big| \le \|\varphi\|_{\rho-1, \Lambda} \int_{\zeta = 0}^a M e^{\lambda |\zeta(a) - \zeta|}\,\left|\frac{\zeta}{\zeta(a)}\right|^{\rho-1}\,e^{-\Lambda(|\zeta(a)| - |\zeta|)}\;|d\zeta|. \]
We again use the straight integration path $\zeta = t \zeta(a)$, with $t \in (0, 1]$. Along this path, $|\zeta(a) - \zeta| = |\zeta(a)| - |\zeta|$, allowing us to combine the exponential factors in our bound:
\begin{align*}
|\zeta(a)|^{-(\rho-1)}\,e^{-\Lambda|\zeta(a)|}\,\big|[\volterra\varphi](a)\big| & \le \|\varphi\|_{\rho-1, \Lambda} \int_0^1 M e^{\lambda |\zeta(a)|(1 - t)}\,t^{\rho-1}\,e^{-\Lambda |\zeta(a)|(1 - t)}\;dt \\
& \le M \|\varphi\|_{\rho-1, \Lambda} \int_0^1 t^{\rho-1}\,e^{-(\Lambda - \lambda)|\zeta(a)|(1 - t)}\;dt.
\end{align*}
Set $\Lambda > \lambda$, so the exponential factor shrinks as $|\zeta(a)|$ grows. Since $|\zeta(a)| \ge \delta$ for all $a \in \far$, we then have the uniform bound
\[ |\zeta(a)|^{-(\rho-1)}\,e^{-\Lambda|\zeta(a)|}\,\big|[\volterra\varphi](a)\big| \le M \|\varphi\|_{\rho-1, \Lambda} \int_0^1 t^{\rho-1}\,e^{-(\Lambda - \lambda)\delta(1 - t)}\;dt. \]
over all $a \in \far$.

We can make this bound less than one, as required to show that $\volterra$ is a contraction, by increasing $\Lambda$. To see this in the trickiest case, where $\rho < 1$, consider the beginning and end of the integration path separately. At the beginning of the path---for $t \in \big(0, \tfrac{1}{5}\big]$, say---we use a worst-case estimate on the exponential factor:
\begin{align*}
\int_0^{1/5} t^{\rho-1}\,e^{-(\Lambda - \lambda)\delta(1 - t)}\;dt & \le e^{-\frac{4}{5}(\Lambda -\lambda)\delta} \int_0^{1/5} t^{\rho-1}\;dt \\
& = e^{-\frac{4}{5}(\Lambda - \lambda)\delta}\,\tfrac{1}{\rho} \big(\tfrac{1}{5}\big)^\rho.
\end{align*}
At the end of the path, we instead use a worst-case estimate on $t^{\rho-1}$:
\begin{align*}
\int_{1/5}^1 t^{\rho-1}\,e^{-(\Lambda - \lambda)\delta(1 - t)}\;dt & \le \max\{(1/5)^{\rho-1}, 1\} \int_{1/5}^1 e^{-(\Lambda - \lambda)\delta(1 - t)}\;dt \\
& = \max\{(1/5)^{\rho-1}, 1\} \frac{1}{(\Lambda - \lambda)\delta}\left[ 1 - e^{-\tfrac{4}{5}(\Lambda - \lambda)\delta} \right] \\
& \le \max\{(1/5)^{\rho-1}, 1\} \frac{1}{(\Lambda - \lambda)\delta}.
\end{align*}
In summary, the beginning of the integral is bounded by a decaying exponential function of $(\Lambda - \lambda)\delta$, and the end of the integral is bounded by a reciprocal function of $(\Lambda - \lambda)\delta$. That means we can make $|\zeta(a)|^{-(\rho-1)}\,e^{-\Lambda|\zeta(a)|}\,\big|[\volterra\varphi](a)\big|$ as small as we want over all $a \in \far$. This completes our proof of Proposition~\ref{prop:get-contraction}.
\subsection{Existence and uniqueness of a fixed point \\ \textit{(proof of Lemma~\ref{lem:perturbed_volterra} and Theorem~\ref{thm:general_volterra})}}\label{sec:existence and uniqueness}
\begin{proof}[Proof of Lemma~\ref{lem:perturbed_volterra}]
Choose $\Lambda \in \R$ large enough that $\singexp{\rho-1}{\Lambda}(\Omega)$ contains $g$ and is contracted by $\volterra$. Proposition~\ref{prop:get-contraction} and Definition~\ref{def:exp-top} guarantee that this is possible. The affine map $f \mapsto \volterra f + g$ is a contraction of $\singexp{\rho-1}{\Lambda}(\Omega)$, and thus has a unique fixed point in $\singexp{\rho-1}{\Lambda}(\Omega)$ by the contraction mapping theorem.

To see that the fixed point is unique in the larger space $\singexpalg{\rho-1}(\Omega)$, first recall from Proposition~\ref{prop:inclus-ge-exp} that we have inclusions $\singexp{\rho-1}{\Lambda'}(\Omega) \hookrightarrow \singexp{\rho-1}{\Lambda}(\Omega)$ for all $\Lambda' \le \Lambda$. Any fixed point in $\singexp{\rho-1}{\Lambda'}(\Omega)$ must map to the unique fixed point in $\singexp{\rho-1}{\Lambda}(\Omega)$ under this inclusion. Next, observe that for any $\Lambda'' \ge \Lambda$, the map $f \mapsto \volterra f + g$ is also a contraction of $\singexp{\rho-1}{\Lambda''}(\Omega)$. The inclusion $\singexp{\rho-1}{\Lambda}(\Omega) \hookrightarrow \singexp{\rho-1}{\Lambda''}(\Omega)$ must send the unique fixed point in the smaller space to the unique fixed point in the larger one. Together, these arguments show that the fixed point is unique in $\singexpalg{\rho-1}(\Omega)$.
\end{proof}
\begin{proof}[Proof of Theorem~\ref{thm:general_volterra}]
Start with the prototype solution $\solproto$ constructed in Theorem~\ref{thm:basic_volterra}. The base point for the construction can be chosen arbitrarily. Under our assumptions about $\volterra$, Theorem~\ref{thm:proto-growth} ensures that $\solproto$ is in $\singexpalg{\tau-1}(\domain)$. Our goal is to find a perturbation $\solptb \in \singexpalg{\tau-1+\gamma}(\domain)$ that makes $\solwhole = \solproto + \solptb$ a solution of 
\begin{equation}\label{eqn:orig-homog}
\solwhole = \volterra \solwhole.
\end{equation}
Observing that
\begin{align*}
\volterra \solproto & = \hardpart\solproto + \softpart\solproto \\
& = \solproto + \softpart \solproto,
\end{align*}
we rewrite the homogeneous equation we are trying to solve as an inhomogeneous equation for $\solptb$:
\begin{align}
\solproto + \solptb & = \volterra\solproto + \volterra\solptb \nonumber \\
& = \solproto + \softpart\solproto + \volterra\solptb \nonumber \\
\solptb & = \softpart\solproto + \volterra\solptb. \label{eqn:equiv-inhomog}
\end{align}
Since we know that $\solproto$ is in $\singexpalg{\tau-1}(\domain)$, Proposition~\ref{prop:smoothing} shows that the inhomogeneous term $\softpart\solproto$ is in $\singexpalg{\tau-1+\gamma}(\domain)$. Since $\tau+\gamma > \tau$, and we have made all the necessary assumptions about $\volterra$, Lemma~\ref{lem:perturbed_volterra} guarantees that equation~\eqref{eqn:equiv-inhomog} has a unique solution $\solptb$ in $\singexpalg{\tau-1+\gamma}(\Omega)$. Equivalently, equation~\eqref{eqn:orig-homog} has a unique solution $f$ in $f_0 + \singexpalg{\tau-1+\gamma}(\domain)$.
\end{proof}
\begin{proof}[Proof of Corollary~\ref{cor:expand_uniq}]
First, suppose $\rho \in (\tau, \tau+\gamma)$. In this case, the inclusion
\[\singexpalg{\tau-1+\gamma}(\Omega) \hookrightarrow \singexpalg{\rho-1}(\Omega)\]
given by Proposition~\ref{prop:inclus-lt-pow-alg} shows that the inhomogeneous term $\softpart\solproto$ is in $\singexpalg{\rho-1}(\Omega)$. Lemma~\ref{lem:perturbed_volterra} then guarantees that equation~\eqref{eqn:equiv-inhomog} has a unique solution in $\singexpalg{\rho-1}(\Omega)$---which must be the solution we already found in $\singexpalg{\tau-1+\gamma}(\Omega)$.

On the other hand, suppose $\rho > \tau+\gamma$. In this case, Proposition~\ref{prop:inclus-lt-pow-alg} gives an inclusion $\singexpalg{\rho-1}(\Omega) \hookrightarrow \singexpalg{\tau-1+\gamma}(\Omega)$. Under this inclusion, any solution of equation~\eqref{eqn:equiv-inhomog} that we might find in $\singexpalg{\rho-1}(\Omega)$ must match the unique solution we found in $\singexpalg{\tau-1+\gamma}(\Omega)$.
\end{proof}
\begin{proposition}\label{prop:alt-general_volterra}
Suppose $\volterra$ satisfies {\em Condition~\eqref{cond:reg-p}} in addition to the other assumptions of {\em Theorem~\ref{thm:general_volterra}}. Then, for each $\rho \in (\tau,\;\tau + \min\{\gamma, \epsilon, 1\}]$, the equation
\[f = \volterra f\]
has a unique solution $f$ in the affine subspace
\[ \zeta^{\tau-1}+\singexpalg{\rho-1}(\domain) \]
of the space $\singexpalg{\tau-1}(\domain)$.
\end{proposition}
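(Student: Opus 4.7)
The plan is to deduce Proposition~\ref{prop:alt-general_volterra} from Theorem~\ref{thm:general_volterra} by identifying the affine subspace $\zeta^{\tau-1} + \singexpalg{\rho-1}(\domain)$ with $\solproto + \singexpalg{\rho-1}(\domain)$ for a suitably normalized prototype solution. First I would fix any $\solproto$ as in Theorem~\ref{thm:basic_volterra} and apply Proposition~\ref{prop:better-proto-estimate}, which gives $\solproto \in M\zeta^{\tau-1} + O\!\left(|\zeta|^{\tau-1+\epsilon'}\right)$ at $\zeta = 0$, with $\epsilon' = \min\{\epsilon, 1\}$. The explicit formula $M = \zeta(b)^{-\tau} B^{-1} e^{\eta(b)}$ from the proof of that proposition, combined with $B \neq 0$ from Condition~\eqref{cond:reg-p}, ensures $M \neq 0$; so by linearity of $\solwhole = \hardpart \solwhole$ I may replace $\solproto$ by $M^{-1}\solproto$ and assume henceforth that $M = 1$.

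Next I would upgrade this pointwise estimate to the global membership $\solproto - \zeta^{\tau-1} \in \singexpalg{\tau-1+\epsilon'}(\domain)$. On a neighborhood $|\zeta| < \delta$ of the origin this is immediate from the normalized form of Proposition~\ref{prop:better-proto-estimate}. On the complementary region $|\zeta| \ge \delta$, Proposition~\ref{prop:asymptotic at infinity} gives $|\solproto| \lesssim |\zeta|^{\tau-1} e^{\lambda_0|\zeta|}$ and hence $|\solproto - \zeta^{\tau-1}| \lesssim |\zeta|^{\tau-1} e^{\lambda_0|\zeta|}$ (taking $\lambda_0 \ge 0$ without loss of generality); since $|\zeta|^{\tau-1} \le \delta^{-\epsilon'}|\zeta|^{\tau-1+\epsilon'}$ on this region, the difference lies in $\singexp{\tau-1+\epsilon'}{\lambda_0}(\domain)$. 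Theorem~\ref{thm:general_volterra} now produces a unique $\solwhole \in \solproto + \singexpalg{\tau-1+\gamma}(\domain)$ with $\solwhole = \volterra\solwhole$, and the decomposition
\[\solwhole - \zeta^{\tau-1} = (\solwhole - \solproto) + (\solproto - \zeta^{\tau-1})\]
places this difference in $\singexpalg{\tau-1+\gamma}(\domain) + \singexpalg{\tau-1+\epsilon'}(\domain)$. For any $\rho \in (\tau,\,\tau + \min\{\gamma,\epsilon,1\}]$, both summand spaces embed into $\singexpalg{\rho-1}(\domain)$ via Proposition~\ref{prop:inclus-lt-pow-alg} (or trivially, at the endpoint), establishing existence.

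For uniqueness, any $\tilde\solwhole \in \zeta^{\tau-1} + \singexpalg{\rho-1}(\domain)$ solving $\tilde\solwhole = \volterra\tilde\solwhole$ also lies in $\solproto + \singexpalg{\rho-1}(\domain)$, because $\zeta^{\tau-1} - \solproto \in \singexpalg{\tau-1+\epsilon'}(\domain) \subseteq \singexpalg{\rho-1}(\domain)$; Corollary~\ref{cor:expand_uniq} then forces $\tilde\solwhole = \solwhole$. The main subtlety is the initial normalization: verifying that the leading constant $M$ from Proposition~\ref{prop:better-proto-estimate} is nonzero and can be absorbed into $\solproto$. Condition~\eqref{cond:reg-p} makes this transparent, and the remainder of the argument is routine bookkeeping with the weighted-space inclusions developed in Section~\ref{sec:inclusions}.
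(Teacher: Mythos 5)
Your proof is correct and follows essentially the same route as the paper's: both identify $\zeta^{\tau-1} + \singexpalg{\rho-1}(\domain)$ with $M^{-1}\solproto + \singexpalg{\rho-1}(\domain)$ via Propositions~\ref{prop:asymptotic at infinity} and \ref{prop:better-proto-estimate}, then invoke the Theorem~\ref{thm:general_volterra} machinery. You are somewhat more careful than the paper on two points: you spell out the near/far decomposition needed to upgrade the local estimate of Proposition~\ref{prop:better-proto-estimate} to the global membership $\solproto - M\zeta^{\tau-1} \in \singexpalg{\tau-1+\epsilon'}(\domain)$, and for uniqueness you explicitly appeal to Corollary~\ref{cor:expand_uniq} rather than (as the paper's terse wording suggests) to Theorem~\ref{thm:general_volterra} directly — which is the right move, since for $\rho < \tau+\gamma$ the affine space $\solproto + \singexpalg{\rho-1}(\domain)$ strictly contains $\solproto + \singexpalg{\tau-1+\gamma}(\domain)$ and uniqueness there is precisely what the corollary supplies.
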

\begin{proof}
Propositions \ref{prop:asymptotic at infinity} and \ref{prop:better-proto-estimate} imply that for some constant $M$,
\[ \zeta^{\tau-1} + \singexpalg{\rho-1}(\domain) = M^{-1}\solproto + \singexpalg{\rho-1}(\domain) \]
whenever $\rho \le \tau + \min\{\epsilon, 1\}$. Theorem~\ref{thm:general_volterra} implies that our equation has a unique solution in
\[ M^{-1}\solproto + \singexpalg{\rho-1}(\domain) \]
as long as $\rho \in (\tau, \tau + \gamma]$. Putting these facts together, we get the desired result.
\end{proof}
\section{A motivating example}\label{sec:example}
\subsection{Overview}
We can use our analysis of regular singular Volterra equations to study so-called \textit{level~1 differential equations}, which each have an irregular singularity at $\infty$. Under certain conditions, we can build a set of analytic solutions in the frequency domain by solving certain regular singular Volterra equations in the position domain. Our main results guarantee that these position domain solutions exist, are unique, and have well-defined Laplace transforms.
\subsection{Level $1$ differential equations}\label{sec:level 1 ODE}
Let $\mathcal{P}$ be a linear differential operator of the form
\[ \mathcal{P} = P\big(\tfrac{\partial}{\partial z}\big) + \frac{1}{z} Q\big(\tfrac{\partial}{\partial z}\big) + \frac{1}{z^2} R(z^{-1}), \]
where
\begin{enumerate}
\item[$\bullet$] $P$ is a monic degree-$d$ polynomial whose roots are all simple; 
\item[$\bullet$] $Q$ is a degree-$(d-1)$ polynomial that is nonzero at every root of $P$;
\item[$\bullet$] $R(z^{-1})$ is holomorphic in some disk $|z| > A$ around $z = \infty$. In particular, the power series
\[ R(z^{-1}) = \sum_{j=0}^\infty R_j z^{-j} \]
converges in the region $|z| > A$.
\end{enumerate}
Equations of the form $\mathcal{P}\Phi = 0$ are a sub-class of what Ecalle calls {\em level~1} differential equations~\cite[Section~2.1]{EcalleIII}\cite[Section~5.2.2.1]{diverg-resurg-iii}. In~\cite{borel_reg}, we study various examples of such equations using Laplace transform methods, with the help of our existence and uniqueness result Theorem~\ref{thm:general_volterra}. 
\subsection{Notation}\label{sec:notation_alpha}
So far, we have studied a Volterra equation with a regular singularity at $\zeta = 0$. When we use Laplace transform methods to solve a level~1 differential equation $\mathcal{P}\Phi = 0$ in the frequency domain, we will end up with several Volterra equations $\hat{\mathcal{P}}_\alpha \varphi = 0$ in the position domain, each with a regular singularity at a different root $\zeta = \alpha$ of the polynomial $P(-\zeta)$.

To use our previous reasoning in this more general setting, we must reinterpret its language. The role of the coordinate $\zeta$ in Sections \ref{sec:intro}--\ref{sec:proof_main_results} will be played by $\zeta-\alpha$ in Section~\ref{sec:example}. This substitution leads to several other reinterpretations, summarized below.

References to the point $\zeta=0$ become references to the point $\zeta-\alpha=0$, which we will rewrite as $\zeta=\alpha$. For example, we will now work on a domain that touches but does not contain $\zeta = \alpha$, introducing the notation $\domain_\alpha$ as a reminder of this change. Condition~\eqref{cond:star} now says that $\domain_\alpha$ is star-shaped around $\zeta = \alpha$. The function space $\singexp{\sigma}{\Lambda}(\domain_\alpha)$ becomes a space of holomorphic functions on $\domain_\alpha$ which have a power-law singularity at $\zeta=\alpha$ and are uniformly of exponential type $\Lambda$. Explicitly, $f$ is in $\singexp{\sigma}{\Lambda}(\domain_\alpha)$ if 
\[ |f| \lesssim |\zeta-\alpha|^\sigma e^{\Lambda |\zeta-\alpha|} \]
over $\domain_\alpha$, making the norm
\[ \|f\|_{\sigma, \Lambda} = \sup_{\domain_\alpha} |\zeta-\alpha|^{-\sigma} e^{-\Lambda |\zeta-\alpha|} |f| \]
well-defined.

Our conditions on Volterra operators change to describe a regular singularity at $\zeta = \alpha$. For example, consider a Volterra operator $\hardpart^\alpha$ of the kind described in Section~\ref{setting:basic}, with kernel $\hardker^\alpha$. This operator now satisfies Condition~\eqref{cond:sing} if for some real, positive constant $\tau_\alpha$, the difference
\[ \hardker^\alpha(a, a) - \frac{\tau_\alpha}{\zeta(a) - \alpha} \]
is bounded on a neighborhood of $\zeta(a) = \alpha$ in $\domain_\alpha$, and for each $\sigma > \tau_\alpha$, the bound
\[ |\hardker^\alpha(a, a')| < \frac{\sigma}{|\zeta(a) - \alpha|} \]
holds over some neighborhood of $\big(\zeta(a), \zeta(a')\big) = (\alpha,\alpha)$ in $\domain_\alpha^2$. Condition~\eqref{cond:diag-basic} now requires that for some constant $\lambda_\Delta$,
\[|\hardker^\alpha(a,a')|\lesssim\frac{1}{|\zeta(a)-\alpha|} e^{\lambda_\Delta|\zeta(a)-\zeta(a')|}\]
over all $a,a'\in\domain_\alpha$. 
Similarly, an operator $\softpart^\alpha$ with kernel $\softker^\alpha$ now satisfies Condition~\eqref{cond:eps-lambda} if for some constants $\gamma > 0$ and $\lambda_\Delta$, we have
\[ |\softker^\alpha(a,a')| \lesssim \frac{|\zeta(a)-\zeta(a')|^\gamma}{|\zeta(a)-\alpha|} e^{\lambda_\Delta |\zeta(a)-\zeta(a')|}\]
over all $a, a' \in \domain_\alpha$.
\subsection{The Laplace transform}
\subsubsection{Definition}\label{sec:definition_Laplace}
Let $\domain_\alpha$ be a simply connected open subset of $\C$ that touches but does not contain $\zeta=\alpha$. Let $\Gamma_{\zeta, \alpha}^\theta$ be the ray in the position domain that leaves $\zeta=\alpha$ at angle $\theta$, and let $\Pi^{-\theta}_\Lambda$ be the half-plane $\Re(z e^{i\theta}) > \Lambda$ in the frequency domain. When $\domain_\alpha$ contains $\Gamma_{\zeta, \alpha}^\theta$, the Laplace transform
\[ \laplace_{\zeta, \alpha}^{\theta} \maps \singexp{\sigma}{\Lambda}(\domain_\alpha) \to \holo(\Pi^{-\theta}_\Lambda) \]
is defined for all $\sigma > -1$ and $\Lambda \in \R$. Its action on a function $\varphi$ is given by the formula
\begin{equation}\label{laplace:int}
\laplace_{\zeta, \alpha}^{\theta} \varphi := \int_{\Gamma_{\zeta,\alpha}^\theta} e^{-z\zeta} \varphi\;d\zeta.
\end{equation}

\subsubsection{Action on integral operators}\label{sec:L-int-op}
For each $\nu \in (0, \infty)$, the fractional integral $\partial^{-\nu}_{\zeta, \alpha}$ is the Volterra operator defined by
\[ [\partial^{-\nu}_{\zeta, \alpha} \varphi](a) := \frac{1}{\Gamma(\nu)} \int_{\zeta = \alpha}^a \big(\zeta(a)-\zeta\big)^{\nu-1} \varphi\;d\zeta \]
for each $a \in \domain_\alpha$. It obeys the semigroup law \cite[Section~1.3]{mladenov2014advanced}
\begin{align*}
\fracderiv{-\mu}{\zeta}{\alpha}\,\fracderiv{-\nu}{\zeta}{\alpha} & = \fracderiv{-\mu-\nu}{\zeta}{\alpha} & \mu, \nu \in (0, \infty),
\end{align*}
and agrees with ordinary repeated integration when $\nu$ is an integer \cite[equation~(35)]{mladenov2014advanced}.

Fractional integration has a smoothing effect: it reduces the sharpness of any power-law singularity at $\zeta = \alpha$.
\begin{proposition}\label{prop:frac-int-smoothing}
For each $\nu \in (0, \infty)$, the fractional integral $\fracderiv{-\nu}{\zeta}{\alpha}$ maps
\[ \singexp{\sigma}{\Lambda}(\domain_\alpha) \to \singexp{\sigma+\nu}{\Lambda}(\domain_\alpha) \]
continuously for all $\sigma > -1$ and $\Lambda \ge 0$.
\end{proposition}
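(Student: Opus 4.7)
The plan is to estimate $\big|[\fracderiv{-\nu}{\zeta}{\alpha} f](a)\big|$ directly from the integral definition of the fractional integral, in the same spirit as the proof of Proposition~\ref{prop:smoothing}. Given $f \in \singexp{\sigma}{\Lambda}(\domain_\alpha)$, I would apply the pointwise bound $|f(a')| \le |\zeta(a')-\alpha|^\sigma\,e^{\Lambda|\zeta(a')-\alpha|}\,\|f\|_{\sigma,\Lambda}$ inside the integrand to get
\[ \big|[\fracderiv{-\nu}{\zeta}{\alpha} f](a)\big| \le \frac{\|f\|_{\sigma,\Lambda}}{\Gamma(\nu)} \int_{\zeta(a') = \alpha}^a |\zeta(a)-\zeta(a')|^{\nu-1}\,|\zeta(a')-\alpha|^\sigma\,e^{\Lambda|\zeta(a')-\alpha|}\;|d\zeta(a')|. \]
Since $\domain_\alpha$ is star-shaped around $\zeta = \alpha$ (the reinterpretation of Condition~\eqref{cond:star} described in Section~\ref{sec:notation_alpha}), the straight segment from $\alpha$ to $a$ lies in $\domain_\alpha$, and I would parametrize it by $\zeta(a') = \alpha + t(\zeta(a)-\alpha)$ with $t \in (0, 1]$.

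Under this parametrization, $|\zeta(a)-\zeta(a')|$ becomes $(1-t)|\zeta(a)-\alpha|$, $|\zeta(a')-\alpha|$ becomes $t|\zeta(a)-\alpha|$, and $|d\zeta(a')|$ becomes $|\zeta(a)-\alpha|\,dt$. Pulling out a factor of $|\zeta(a)-\alpha|^{\nu+\sigma}$ and using $\Lambda \ge 0$ to replace $e^{\Lambda t|\zeta(a)-\alpha|}$ by the $t$-independent upper bound $e^{\Lambda|\zeta(a)-\alpha|}$ gives
\[ \big|[\fracderiv{-\nu}{\zeta}{\alpha} f](a)\big| \le \frac{\|f\|_{\sigma,\Lambda}}{\Gamma(\nu)}\,|\zeta(a)-\alpha|^{\nu+\sigma}\,e^{\Lambda|\zeta(a)-\alpha|} \int_0^1 (1-t)^{\nu-1}\,t^\sigma\,dt. \]
The remaining integral is Euler's beta integral~\cite[equation~(5.12.1)]{dlmf}; it converges precisely because $\nu > 0$ and $\sigma > -1$, and evaluates to $\Gamma(\nu)\Gamma(\sigma+1)/\Gamma(\nu+\sigma+1)$. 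Dividing through by $|\zeta(a)-\alpha|^{\nu+\sigma}\,e^{\Lambda|\zeta(a)-\alpha|}$ then yields the explicit operator bound
\[ \|\fracderiv{-\nu}{\zeta}{\alpha} f\|_{\sigma+\nu,\Lambda} \le \frac{\Gamma(\sigma+1)}{\Gamma(\nu+\sigma+1)}\,\|f\|_{\sigma,\Lambda}, \]
which shows both that $\fracderiv{-\nu}{\zeta}{\alpha} f$ belongs to $\singexp{\sigma+\nu}{\Lambda}(\domain_\alpha)$ and that the map is continuous.

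There is no genuine obstacle here; the argument is essentially a streamlined version of the proof of Proposition~\ref{prop:smoothing}, simpler because the fractional integration kernel $(\zeta(a)-\zeta(a'))^{\nu-1}/\Gamma(\nu)$ is separable and lacks both the $|\zeta(a)-\alpha|^{-1}$ singularity and the exponential growth factor that appear in Condition~\eqref{cond:eps-lambda}. The only delicate point is bookkeeping: one should check that all three hypotheses enter in an essential way, with $\nu > 0$ ensuring convergence of $(1-t)^{\nu-1}$ near $t = 1$, $\sigma > -1$ ensuring convergence of $t^\sigma$ near $t = 0$, and $\Lambda \ge 0$ licensing the replacement of $e^{\Lambda t|\zeta(a)-\alpha|}$ by $e^{\Lambda|\zeta(a)-\alpha|}$.
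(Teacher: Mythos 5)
Your argument is correct, but it takes a somewhat different route from the paper. You estimate $\fracderiv{-\nu}{\zeta}{\alpha}f$ directly: pointwise norm bound, straight-line parametrization over the star-shaped domain, and Euler's beta integral, yielding the explicit operator norm bound $\Gamma(\sigma+1)/\Gamma(\nu+\sigma+1)$. The paper instead reuses machinery it has already built: it factors the kernel as $\zeta(a)-\alpha$ times $h(a,a') = \tfrac{1}{\Gamma(\nu)}\,\tfrac{(\zeta(a)-\zeta(a'))^{\nu-1}}{\zeta(a)-\alpha}$, observes that $h$ satisfies Condition~\eqref{cond:eps-lambda} with $\gamma = \nu-1$ and $\lambda_\Delta = 0$ (invoking the remark that Proposition~\ref{prop:smoothing} only needs $\gamma > -1$), applies Proposition~\ref{prop:smoothing} to land in $\singexp{\sigma+\nu-1}{\Lambda}(\domain_\alpha)$, and then multiplies by $\zeta-\alpha$ to gain the last power. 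The underlying analytic content is identical---both proofs come down to the same beta integral along the same straight path, and both use $\Lambda \ge 0$ in the same way---so your version is essentially the paper's Proposition~\ref{prop:smoothing} computation unrolled; what it buys is self-containedness and an explicit constant, while the paper's reduction is shorter and makes clear that fractional integration is just a special (boundary) case of the smoothing already proved for $\softpart$. One small slip in your commentary: the kernel $(\zeta(a)-\zeta(a'))^{\nu-1}/\Gamma(\nu)$ is not separable in the paper's sense (a product of a function of $a$ with a function of $a'$) unless $\nu = 1$; this claim plays no role in your estimate, so the proof stands, but the remark should be dropped.
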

\begin{proof}
Rewrite the fractional integral as
\begin{align*}
\left[\fracderiv{-\nu}{\zeta}{\alpha} \varphi\right](a)&=\frac{1}{\Gamma(\nu)}\int_{\zeta=\alpha}^a (\zeta(a)-\zeta)^{\nu-1} \, \varphi \; d\zeta\\
&=\frac{\zeta(a)-\alpha}{\Gamma(\nu)}\int_{\zeta=\alpha}^a \frac{(\zeta(a)-\zeta)^{\nu-1}}{\zeta(a)- \alpha}\,\varphi \; d\zeta.
\end{align*}
The Volterra operator with kernel
\[ h(a, a') = \frac{1}{\Gamma(\nu)}\,\frac{(\zeta(a)-\zeta(a'))^{\nu-1}}{\zeta(a)- \alpha} \]
satisfies Condition~\eqref{cond:eps-lambda} with $\gamma=\nu-1$ and $\lambda_\Delta=0$, as long as we loosen the condition to allow any $\gamma > -1$. Hence, by Proposition~\ref{prop:smoothing}, this operator maps
\[ \singexp{\sigma}{\Lambda}(\domain_\alpha) \to \singexp{\sigma+\nu-1}{\Lambda}(\domain_\alpha) \]
continuously for all $\sigma > -1$ and $\Lambda \ge 0$. Multiplication by $\zeta - \alpha$ then maps
\[ \singexp{\sigma+\nu-1}{\Lambda}(\domain_\alpha) \to \singexp{\sigma+\nu}{\Lambda}(\domain_\alpha) \]
continuously.
\end{proof}

From Proposition~\ref{prop:frac-int-smoothing}, we deduce that when $\varphi$ belongs to $\singexpalg{\sigma}(\domain_\alpha)$ for some $\sigma > -1$, its fractional integral $\fracderiv{-\nu}{\zeta}{\alpha}\varphi$ has a well-defined Laplace transform along any ray $\Gamma_{\zeta,\alpha}^{\theta}\subset\domain_\alpha$. Using a 2d integration argument, akin to the one in \cite[Theorem~2.39]{laplace-tfm}, one can show that 
\[ \laplace_{\zeta,\alpha}^{\theta}\,\fracderiv{-\nu}{\zeta}{\alpha} \varphi = z^{-\nu} \laplace_{\zeta, \alpha}^{\theta} \varphi \]
for all $\nu \in (0, \infty)$ and $\varphi\in\singexp{\sigma}{\Lambda}(\domain_\alpha)$ with $\sigma>-1$. This mirrors the Laplace transform's action on multiplication operators: under the same conditions on $\varphi$, differentiation under the integral shows that~\cite[Theorem~1.34]{laplace-tfm}
\[ \laplace_{\zeta,\alpha}^\theta (\zeta^n \varphi) = \big({-\tfrac{\partial}{\partial z}}\big)^n \laplace_{\zeta,\alpha}^\theta \varphi \]
for all integers $n \ge 0$ and suitable directions $\theta$.

\begin{rmk}
In the relationship between $\fracderiv{-\nu}{\zeta}{\alpha}$ and multiplication by $z^{-\nu}$ described above, smoothing out the singularity at $\zeta = \alpha$ in the position domain corresponds to speeding up the decay as $|z| \to \infty$ in the frequency domain. This is a general feature of the Laplace transform.
\end{rmk}
\subsection{Going to the position domain}
Under the Laplace transform $\laplace_{\zeta,\alpha}^{\theta}$, differential operators on the frequency domain pull back to integral operators on the position domain.
\begin{lemma}\label{lem:use-dict}
For any $\alpha \in \C$, let $\hat{\mathcal{P}}_{\alpha}$ be the integral operator 
\[ \hat{\mathcal{P}}_\alpha:=P(-\zeta)+\partial_{\zeta,\alpha}^{-1}\circ Q(-\zeta)+\partial_{\zeta,\alpha}^{-2}\circ R(\partial_{\zeta,\alpha}^{-1}). \]
If $\psi_\alpha$ satisfies the equation $\hat{\mathcal{P}}_\alpha\psi_\alpha=0$, then its Laplace transform $\Psi_\alpha:=\laplace_{\zeta,\alpha}^{\theta}\psi_\alpha$ satisfies the equation $\mathcal{P}\Psi_\alpha=0$, as long as the Laplace transform is well-defined.
\end{lemma}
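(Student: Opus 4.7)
The strategy is to apply $\laplace_{\zeta,\alpha}^\theta$ to both sides of $\hat{\mathcal{P}}_\alpha \psi_\alpha = 0$ and match the result, summand by summand, with $\mathcal{P}\Psi_\alpha$. Two transform identities recalled in Section~\ref{sec:L-int-op} do the bulk of the work: differentiation under the integral gives $\laplace_{\zeta,\alpha}^\theta\big((-\zeta)^n \varphi\big) = \big(\tfrac{\partial}{\partial z}\big)^n \laplace_{\zeta,\alpha}^\theta \varphi$, while the fractional integration identity gives $\laplace_{\zeta,\alpha}^\theta\,\fracderiv{-\nu}{\zeta}{\alpha}\varphi = z^{-\nu}\,\laplace_{\zeta,\alpha}^\theta\varphi$ for each $\nu > 0$. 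Composing these immediately yields $\laplace\big(P(-\zeta)\psi_\alpha\big) = P\big(\tfrac{\partial}{\partial z}\big)\Psi_\alpha$ and $\laplace\big(\fracderiv{-1}{\zeta}{\alpha} Q(-\zeta)\psi_\alpha\big) = z^{-1}\,Q\big(\tfrac{\partial}{\partial z}\big)\Psi_\alpha$, which account for the first two pieces of $\mathcal{P}$.

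For the third piece, I would expand $R(u) = \sum_{j\ge 0} R_j u^j$ and use the semigroup law $\big(\fracderiv{-1}{\zeta}{\alpha}\big)^j = \fracderiv{-j}{\zeta}{\alpha}$ to write $\fracderiv{-2}{\zeta}{\alpha} R(\fracderiv{-1}{\zeta}{\alpha})\psi_\alpha = \sum_j R_j\,\fracderiv{-(2+j)}{\zeta}{\alpha}\psi_\alpha$. Applying the fractional integration identity term by term and summing would give $\sum_j R_j z^{-(2+j)}\Psi_\alpha = z^{-2}R(z^{-1})\Psi_\alpha$. Assembling the three pieces would yield $\laplace_{\zeta,\alpha}^\theta(\hat{\mathcal{P}}_\alpha\psi_\alpha) = \mathcal{P}\Psi_\alpha$, and the hypothesis $\hat{\mathcal{P}}_\alpha\psi_\alpha = 0$ then delivers $\mathcal{P}\Psi_\alpha = 0$.

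The main obstacle is justifying the two interchanges in the third step: expanding the operator series and pulling $\laplace_{\zeta,\alpha}^\theta$ through it. The ingredients are all in place. By Proposition~\ref{prop:frac-int-smoothing}, iterated fractional integration sends $\singexp{\sigma}{\Lambda}(\domain_\alpha)$ continuously into $\singexp{\sigma+2+j}{\Lambda}(\domain_\alpha)$, with operator norm controlled by the $1/\Gamma(2+j)$ prefactor built into $\fracderiv{-(2+j)}{\zeta}{\alpha}$. Because Cauchy--Hadamard applied to $R$ gives $|R_j| \lesssim (A+\varepsilon)^j$ for any $\varepsilon > 0$, the gamma factor beats the growth of $R_j$, so $\sum_j R_j \fracderiv{-(2+j)}{\zeta}{\alpha}\psi_\alpha$ converges absolutely in a common weighted space on $\domain_\alpha$. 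On the frequency side, the series $\sum_j R_j z^{-(2+j)}\Psi_\alpha$ converges absolutely on $\Pi_\Lambda^{-\theta}\cap\{|z|>A\}$, and dominated convergence on the defining Laplace integral legalizes the interchange there; the identity $\mathcal{P}\Psi_\alpha = 0$ then propagates to the rest of the domain of $\Psi_\alpha$ by analytic continuation.
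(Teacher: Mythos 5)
Your proposal takes essentially the same route as the paper: the paper's proof simply observes that the transform identities of Section~\ref{sec:L-int-op} yield the intertwining relation $\mathcal{P}\,\laplace_{\zeta,\alpha}^{\theta} = \laplace_{\zeta,\alpha}^{\theta}\,\hat{\mathcal{P}}_\alpha$ and applies it to $\psi_\alpha$. Your additional care with the series $R(\partial_{\zeta,\alpha}^{-1})$ (term-by-term use of the semigroup law, convergence of $\sum_j R_j\,\fracderiv{-(2+j)}{\zeta}{\alpha}\psi_\alpha$, and the interchange with $\laplace_{\zeta,\alpha}^{\theta}$) just fills in details that the paper delegates to the cited two-dimensional integration argument and to the kernel bound for $k_R$ established later in the proof of Theorem~\ref{thm:example}.
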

\begin{rmk}
For the Laplace transform to be well-defined, $\domain_\alpha$ must contain the ray $\Gamma_{\zeta, \alpha}^\theta$ as a necessary condition. We will see later that for our results to apply, $\domain_\alpha$ also cannot touch any zero of $P(-\zeta)$ other than $\zeta = \alpha$. As a result, $\domain_\alpha$ might look like the domain illustrated in Section \ref{setting:domain}, reinterpreting the origin as $\zeta = \alpha$. 
\end{rmk}
\begin{rmk}\label{rmk:use-dict-explicit}
To write $\hat{\mathcal{P}}_\alpha$ more explicitly, let $p := P(-\zeta)$ and $q := Q(-\zeta)$, noting that $\zeta = \alpha$ is a root of $p$. Then
\begin{revtwo}
\[ \big[\hat{\mathcal{P}}_\alpha\varphi\big](a) = \big[ p\, \varphi \big](a) + \int_{\zeta(a')=\alpha}^a q(a') \, \varphi(a') \; d\zeta(a') + \int_{\zeta(a')=\alpha}^a k_R(a, a') \, \varphi(a') \, d\zeta(a'), \]
\end{revtwo}
where
\[ k_R(a,a') := \sum_{j=0}^\infty \frac{R_{j}}{(j+1)!} \, \big(\zeta(a)-\zeta(a')\big)^{j+1} . \]
\end{rmk}
\begin{proof}[Proof of Lemma~\ref{lem:use-dict}]
Comparing the definitions of $\mathcal{P}$ and $\hat{\mathcal{P}}_\alpha$, and using the properties of the Laplace transform discussed in Section~\ref{sec:L-int-op}, we find that $\mathcal{P} \laplace_{\zeta,\alpha}^\theta = \laplace_{\zeta,\alpha}^\theta \hat{\mathcal{P}}_\alpha$. Hence,
\begin{align*}
\mathcal{P}\Psi_\alpha & = \mathcal{P}\laplace_{\zeta,\alpha}^{\theta}\psi_\alpha \\
& = \laplace_{\zeta,\alpha}^{\theta}\hat{\mathcal{P}}_\alpha\psi_\alpha.
\end{align*}
\end{proof}

We can now state and prove the main result of this section. Choose a simply connected open set $\domain_\alpha$ that touches $\zeta = \alpha$ and does not touch any other roots of $P(-\zeta)$, as illustrated in Section~\ref{setting:domain} under the reinterpretation from Section \ref{sec:notation_alpha}. We will show that the Volterra equation $\hat{\mathcal{P}}_\alpha \varphi = 0$ has a unique solution of a certain form on $\domain_\alpha$. By Lemma~\ref{lem:use-dict}, the Laplace transform of this solution satisfies the equation $\mathcal{P}\Phi = 0$.
\begin{theorem}\label{thm:example}
Suppose $P$ has a root $-\alpha$ where $\tau_\alpha := Q(-\alpha)/P'(-\alpha)$ is real and positive. Choose a simply connected open set $\domain_\alpha$ that touches
$\zeta = \alpha$, and does not touch any other root of $P(-\zeta)$. Then the equation
\[ \hat{\mathcal{P}}_\alpha \psi_\alpha = 0 \]
has a unique solution $\psi_\alpha$ in the affine subspace
\[ \zeta^{\tau_\alpha-1} + \singexpalg{\tau_\alpha}(\domain_\alpha) \]
of the space $\singexpalg{\tau_\alpha-1}(\domain_\alpha)$.
\end{theorem}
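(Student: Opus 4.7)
The plan is to recognize the equation $\hat{\mathcal{P}}_\alpha \psi_\alpha = 0$ as equivalent to a Volterra fixed-point equation $\psi_\alpha = \volterra^\alpha \psi_\alpha$ of the sort studied in Theorem~\ref{thm:general_volterra} and its sharpened counterpart Proposition~\ref{prop:alt-general_volterra}, then verify the hypotheses of that result in the reinterpreted setting of Section~\ref{sec:notation_alpha}. Using the expansion of $\hat{\mathcal{P}}_\alpha$ from Remark~\ref{rmk:use-dict-explicit} and dividing through by $-p := -P(-\zeta)$, which vanishes on $\domain_\alpha$ only at $\zeta = \alpha$, I would read off $\volterra^\alpha = \hardpart^\alpha + \softpart^\alpha$ with kernels
\[ \hardker^\alpha(a,a') = -\frac{Q(-\zeta(a'))}{P(-\zeta(a))}, \qquad \softker^\alpha(a,a') = -\frac{k_R(a,a')}{P(-\zeta(a))}. \]

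Next I would verify, in the reinterpreted form from Section~\ref{sec:notation_alpha}, the four conditions needed for Proposition~\ref{prop:alt-general_volterra}. Condition~\eqref{cond:sing} with constant $\tau_\alpha$ is immediate from Taylor expansion of $P$ and $Q$ at $-\alpha$: since $-\alpha$ is a simple root of $P$, the diagonal restriction $\hardker^\alpha(a,a)$ has a simple pole at $\zeta = \alpha$ with residue $Q(-\alpha)/P'(-\alpha) = \tau_\alpha$, and the bi-neighborhood bound follows by continuity once $\sigma > \tau_\alpha$. Condition~\eqref{cond:reg-p} with $\epsilon = 1$ and $B = -P'(-\alpha)$ falls out of the same expansion. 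Condition~\eqref{cond:eps-lambda} with $\gamma = 1$ for $\softpart^\alpha$ rests on the Cauchy estimate $|R_j| \lesssim A'^j$ for any $A' > A$, which via the identity $\sum_{j \ge 0} u^j/(j+1)! = (e^u - 1)/u$ yields $|k_R(a,a')| \lesssim |\zeta(a) - \zeta(a')|\,e^{A'|\zeta(a) - \zeta(a')|}$; combining this with the global bound $1/|P(-\zeta(a))| \lesssim 1/|\zeta(a) - \alpha|$ on $\domain_\alpha$ delivers the required inequality.

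The main obstacle I expect is Condition~\eqref{cond:diag-basic}, which demands a uniform bound on $\domain_\alpha^2$ even when $\domain_\alpha$ is unbounded and $Q$ has degree as large as $d-1$. My approach is to factor $P(-\zeta) = (-1)^d \prod_j (\zeta - \alpha_j)$ and use that $\domain_\alpha$ stays a positive distance from each root $\alpha_j \ne \alpha$, together with the polynomial's asymptotics at infinity, to obtain a global lower bound $|P(-\zeta(a))| \gtrsim |\zeta(a) - \alpha|\,\max(1, |\zeta(a)|)^{d-1}$ on $\domain_\alpha$. On the numerator side, the triangle inequality $|\zeta(a')| \le |\zeta(a)| + |\zeta(a) - \zeta(a')|$ lets me split the polynomial growth of $Q(-\zeta(a'))$ into a \emph{local} piece absorbed by $\max(1, |\zeta(a)|)^{d-1}$ and a \emph{displacement} piece dominated by $|\zeta(a) - \zeta(a')|^{d-1} \lesssim e^{\lambda_\Delta|\zeta(a) - \zeta(a')|}$; the delicate point is performing the splitting without picking up an unwanted factor of $e^{\lambda|\zeta(a) - \alpha|}$, which is precisely what the extra $\max(1, |\zeta(a)|)^{d-1}$ in the denominator permits.

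With all four conditions in hand and $\min\{\gamma, \epsilon, 1\} = 1$, I would close by applying Proposition~\ref{prop:alt-general_volterra} with $\rho = \tau_\alpha + 1$ to obtain a unique solution of $\psi_\alpha = \volterra^\alpha \psi_\alpha$ in the affine subspace $(\zeta - \alpha)^{\tau_\alpha - 1} + \singexpalg{\tau_\alpha}(\domain_\alpha)$. This matches the stated conclusion under the coordinate reinterpretation $\zeta_\alpha = \zeta - \alpha$ from Section~\ref{sec:notation_alpha}.
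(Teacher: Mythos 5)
Your proposal is correct and follows essentially the same route as the paper: the same splitting $\volterra^\alpha = \hardpart^\alpha + \softpart^\alpha$ with kernels $-q(a')/p(a)$ and $-k_R(a,a')/p(a)$, verification of Conditions \eqref{cond:sing}, \eqref{cond:diag-basic}, \eqref{cond:reg-p}, \eqref{cond:eps-lambda} in the $\zeta-\alpha$ reinterpretation, and an application of Proposition~\ref{prop:alt-general_volterra} with $\rho = \tau_\alpha+1$. The only differences are in the elementary estimates: the paper checks \eqref{cond:sing} via the partial-fraction decomposition of $1/P(-\zeta)$ and \eqref{cond:diag-basic} by expanding $Q$ in powers of $\zeta(a)-\zeta(a')$ with coefficients controlled by $P_\alpha$, while you use a Taylor/continuity argument at $\zeta=\alpha$ and a triangle-inequality splitting against the lower bound $|P(-\zeta)|\gtrsim|\zeta-\alpha|\max(1,|\zeta|)^{d-1}$ on $\domain_\alpha$ --- equivalent arguments yielding the same bounds.
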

\begin{proof}
To apply Proposition~\ref{prop:alt-general_volterra}, we must rewrite the equation $\hat{\mathcal{P}}_{\alpha}\varphi = 0$ in the form $\varphi = \volterra^\alpha \varphi$, where $\volterra^\alpha$ is a Volterra operator of the kind described in Section~\ref{setting} under the reinterpretation from Section~\ref{sec:notation_alpha}. When $\hat{\mathcal{P}}_\alpha$ is written as in Remark~\ref{rmk:use-dict-explicit}, the equation $\hat{\mathcal{P}}_{\alpha} \varphi = 0$ becomes
\begin{revtwo}
\[ \big[ p\, \varphi \big](a) + \int_{\zeta(a')=\alpha}^a q(a') \, \varphi(a') \; d\zeta(a') + \int_{\zeta(a')=\alpha}^a k_R(a, a') \, \varphi(a') \, d\zeta(a') = 0. \]
\end{revtwo}
Isolating $p\,\varphi$ on the left and dividing both sides by $p$ yields the equivalent equation
\begin{revtwo}
\begin{equation}\label{eqn:divided}
\varphi(a) = -\int_{\zeta(a')=\alpha}^a \frac{q(a')}{p(a)} \, \varphi(a') \; d\zeta(a') - \int_{\zeta(a')=\alpha}^a \frac{k_R(a, a')}{p(a)} \, \varphi(a') \, d\zeta(a').
\end{equation}
\end{revtwo}
Let $\hardpart^\alpha$ be the Volterra operator with kernel
\[ \hardker^\alpha(a,a') = -\frac{q(a')}{p(a)}, \]
and let $\softpart^\alpha$ be the Volterra operator with kernel
\[ \softker^\alpha(a,a') = -\frac{k_R(a,a')}{p(a)}. \]
Equation~\eqref{eqn:divided} can be written as $\varphi = \volterra^\alpha \varphi$, where $\volterra^\alpha = \hardpart^\alpha + \softpart^\alpha$.

The kernel of $\volterra^\alpha$ extends meromorphically over all of $\C^2$. To see this, recall that $R(z^{-1})$ is holomorphic on the disk $|z| > A$. It follows that for any $\lambda > A$, we have $|R_j| \lesssim \lambda^j$ over all $j \in \{0, 1, 2, \ldots\}$. The power series defining $k_R$ therefore converges everywhere.
\begin{verify}
\begin{align*}
& \sum_{j = 0}^\infty R_j z^{-j} \text{ converges when } |z| > A \\
\Longrightarrow & \sum_{j = 0}^\infty R_j z^{-j} \text{ converges when } |z^{-1}| < A^{-1} \\
\Longrightarrow & A^{-1} \le \operatorname{radius} R_j \\
\Longrightarrow & A \ge \big(\operatorname{radius} R_j\big)^{-1} \\
\Longrightarrow & A \ge \limsup_{j \to \infty} R_j^{1/j} \\
\Longrightarrow & A \ge \inf_{J \ge 0} \sup_{j \ge J} R_j^{1/j} \\
\Longrightarrow & \forall \lambda > A: \exists J: \sup_{j \ge J} R_j^{1/j} < \lambda \\
\Longrightarrow & \forall \lambda > A: \exists J: \forall j \ge J: R_j^{1/j} < \lambda \\
\Longrightarrow & \forall \lambda > A: \exists J: \forall j \ge J: R_j < \lambda^j
\end{align*}
\end{verify}

Since $\hardpart^\alpha$ has a separable kernel of the form described in Section~\ref{setting:basic}, we can apply Proposition~\ref{prop:alt-general_volterra} as long as $\hardpart^\alpha$ and $\softpart^\alpha$ satisfy the required conditions from Section~\ref{setting}.
\begin{itemize}
\item Condition~\eqref{cond:sing} {\em on $\hardpart^\alpha$ is satisfied.}

Since $P$ is a monic polynomial whose roots are all simple, $1/P(-\zeta)$ has the nice partial fraction decomposition
\[ \frac{1}{P(-\zeta)} = \sum_{-\beta \in \roots} \frac{1}{-P'(-\beta)\,(\zeta - \beta)}, \]
where $\roots$ is the zero set of $P$ \cite[Section 1.4, Exercise~2]{ahlfors}. 
It follows that
\begin{align*}
-\frac{Q\big(-\zeta(a')\big)}{P\big(-\zeta(a)\big)} & = \sum_{-\beta \in \roots} \frac{Q\big(-\zeta(a')\big)}{P'(-\beta)\,\big(\zeta(a) - \beta\big)} \\
& = \sum_{\substack{-\beta \in \roots \\ \beta \neq \alpha}} \frac{Q\big(-\zeta(a')\big)}{P'(-\beta)\,(\zeta(a) - \beta)} + \frac{Q\big(-\zeta(a')\big)}{P'(-\alpha)\,(\zeta(a) - \alpha)}.
\end{align*}
Now we expand $Q$ around $-\alpha$, using the decomposition
\[ Q(-\zeta) = Q(-\alpha)+(\zeta-\alpha)\,Q_\alpha(-\zeta) \]
to split up the last term of the expression above:
\begin{multline}\label{eqn:split-numerator}
-\frac{Q\big(-\zeta(a')\big)}{P\big(-\zeta(a)\big)} = \\
\sum_{\substack{-\beta \in \roots \\ \beta \neq \alpha}} \frac{Q\big(-\zeta(a')\big)}{P'(-\beta)\,\big(\zeta(a) - \beta\big)} + \frac{\big(\zeta(a') - \alpha\big)\,Q_\alpha\big(-\zeta(a')\big)}{P'(-\alpha)\,\big(\zeta(a) - \alpha\big)} + \frac{Q(-\alpha)}{P'(-\alpha)\,\big(\zeta(a) - \alpha\big)}.    
\end{multline}
First, we want to show that $-q/p-\tau_\alpha/(\zeta-\alpha)$ is bounded on a neighborhood of $\zeta = \alpha$ in $\domain_\alpha$. It follows from equation~\eqref{eqn:split-numerator} that
\begin{align*}
-\frac{Q(-\zeta)}{P(-\zeta)} & =
\sum_{\substack{-\beta \in \roots \\ \beta \neq \alpha}} \frac{Q(-\zeta)}{P'(-\beta)\,(\zeta - \beta)} + \frac{(\zeta - \alpha)\,Q_\alpha(-\zeta)}{P'(-\alpha)\,(\zeta - \alpha)} + \frac{Q(-\alpha)}{P'(-\alpha)\,(\zeta - \alpha)} \\
&=\sum_{\substack{-\beta \in \roots \\ \beta \neq \alpha}} \frac{Q(-\zeta)}{P'(-\beta)\,(\zeta - \beta)} + \frac{Q_\alpha(-\zeta)}{P'(-\alpha)} + \frac{Q(-\alpha)}{P'(-\alpha)\,(\zeta - \alpha)}\,.
\end{align*}
Recalling that $\tau_\alpha = Q(-\alpha)/P'(-\alpha)$, we conclude that
\[-\frac{Q(-\zeta)}{P(-\zeta)}-\frac{\tau_\alpha}{\zeta-\alpha}=\sum_{\substack{-\beta \in \roots \\ \beta \neq \alpha}} \frac{Q(-\zeta)}{P'(-\beta)\,(\zeta - \beta)} + \frac{Q_\alpha(-\zeta)}{P'(-\alpha)}\,. \]
Since $Q$ and $Q_\alpha$ are polynomials, the right-hand side is bounded on any neighborhood of $\zeta = \alpha$ that avoids the other roots of $p$ and avoids infinity.

Next, we want to show that for any $\sigma > \tau_\alpha$, the bound
\[ |\zeta(a) - \alpha| \left|\frac{q(a')}{p(a)}\right| < \sigma \]
holds when $a$ and $a'$ are close enough to $\zeta = \alpha$. Taking the absolute value of both sides of equation~\eqref{eqn:split-numerator}, and using the assumption that $\tau_\alpha$ is real and positive, we see that
\begin{align*}
\left|\frac{Q\big(-\zeta(a')\big)}{P\big(-\zeta(a)\big)}\right| & \le \\ & \hspace{-15mm} \sum_{\substack{-\beta \in \roots \\ \beta \neq \alpha}} \left|\frac{Q\big(-\zeta(a')\big)}{P'(-\beta)\,\big(\zeta(a) - \beta\big)}\right| + \left|\frac{(\zeta(a') - \alpha)\,Q_\alpha\big(-\zeta(a')\big)}{P'(-\alpha)\,\big(\zeta(a) - \alpha\big)}\right| + \left|\frac{\tau_\alpha}{\zeta(a) - \alpha}\right| \\
|\zeta(a) - \alpha|\left|\frac{Q\big(-\zeta(a')\big)}{P\big(-\zeta(a)\big)}\right| & \le \\
& \hspace{-15mm} \sum_{\substack{-\beta \in \roots \\ \beta \neq \alpha}} \left|\frac{Q\big(-\zeta(a')\big)\,\big(\zeta(a) - \alpha\big)}{P'(-\beta)\,\big(\zeta(a) - \beta\big)}\right| + \left|\frac{\big(\zeta(a') - \alpha\big)\,Q_\alpha\big(-\zeta(a')\big)}{P'(-\alpha)}\right| + \tau_\alpha\,.
\end{align*}
By bringing $\zeta(a')$ closer to $\alpha$, we can make the middle term
\[\left|\frac{\big(\zeta(a') - \alpha\big)\,Q_\alpha\big(-\zeta(a')\big)}{P'(-\alpha)}\right|\]
arbitrarily small. Similarly, by bringing $\zeta(a)$ closer to $\alpha$, we can make the sum
\[\sum_{\substack{-\beta \in \roots \\ \beta \neq \alpha}} \left|\frac{Q\big(-\zeta(a')\big)\,\big(\zeta(a) - \alpha\big)}{P'(-\beta)\,\big(\zeta(a) - \beta\big)}\right|\]
arbitrarily small. Hence, for any $\sigma > \tau_\alpha$, the bound
\[|\zeta(a)-\alpha| \, \left|\frac{Q\big(-\zeta(a')\big)}{P\big(-\zeta(a)\big)}\right| \le \sigma \]
holds over some neighborhood of $\big(\zeta(a), \zeta(a')\big) = (\alpha, \alpha)$ in $\domain_\alpha^2$.
\begin{verify}
\par Solution of Ahlfors exercise. First, observe that
\begin{align*}
P(t) &= (t + \beta_1) \cdots (t + \beta_d) \\
P'(t) &= \sum_k \left[ \prod_{j \neq k} (t + \beta_j) \right] \\
P'(-\beta_1) &= \prod_{j\neq 1} (-\beta_1 + \beta_j) 
\end{align*}
Then we see that
\begin{align*}
\frac{1}{P(t)} & = \sum_{j} \frac{\pi_j}{t + \beta_j}\\
-\frac{P'(t)}{P(t)^2} &= -\sum_{j} \frac{\pi_\beta}{(t + \beta_j)^2}\\
P'(t) &= \sum_{j} \pi_j \left(\frac{P(t)}{t + \beta_j}\right)^2\\
P'(-\beta_1) &=  \pi_1 \prod_{j\neq 1} (-\beta_1+\beta_j)^2\\
\frac{1}{P'(-\beta_1)} &= \pi_1
\end{align*}
\end{verify}
\item Condition~\eqref{cond:diag-basic} {\em on $\hardpart^\alpha$ is satisfied.}

Since we want to bound $\big(\zeta(a) - \alpha\big)\,\hardker^\alpha(a, a')$ with a function of the difference $\omega(a, a') := \zeta(a) - \zeta(a')$, we will rewrite it as a rational function of $\omega(a, a')$ and $\zeta(a)$. First, rewrite
\begin{align*}
q(a') & = Q\big(-\zeta(a')\big) \\
& = Q\big(\omega(a, a') - \zeta(a)\big)
\end{align*}
in the form
\[ q(a') = Q_{d-1}\big(\zeta(a)\big)\;\omega(a, a')^{d-1} + \ldots + Q_1\big(\zeta(a)\big)\;\omega(a, a') + Q_0\big(\zeta(a)\big), \]
where $Q_0, \ldots Q_{d-1}$ are polynomials of degree at most $d-1$. Next, knowing that $\alpha$ is a root of $p$, rewrite $p$ in the form $(\zeta - \alpha)\,P_\alpha(\zeta)$, where $P_\alpha$ is a polynomial of degree $d-1$. We can then write $-\big(\zeta(a) - \alpha\big)\,\hardker^\alpha(a, a')$ in the form
\[ \big(\zeta(a)-\alpha\big)\,\frac{q(a')}{p(a)} = \frac{Q_{d-1}\big(\zeta(a)\big)}{P_\alpha\big(\zeta(a)\big)}\;\omega(a, a')^{d-1} + \ldots + \frac{Q_1\big(\zeta(a)\big)}{P_\alpha\big(\zeta(a)\big)}\;\omega(a, a') + \frac{Q_0\big(\zeta(a)\big)}{P_\alpha\big(\zeta(a)\big)}, \]
viewing it as a polynomial in $\omega(a, a')$ whose coefficients are rational functions in $\zeta(a)$. If we keep $a$ away from the roots of $p$ other than $\zeta = \alpha$, each coefficient is bounded, so $|\zeta(a) - \alpha|\,|\hardker^\alpha(a, a')|$ is bounded by a polynomial in $|\omega(a, a')|$. It follows that for any $\lambda_\Delta > 0$, and any domain $\domain_\alpha$ that avoids all the roots of $p$ other than $\zeta = \alpha$, we have
\[ |\zeta(a)-\alpha|\,|\hardker^\alpha(a, a')| \lesssim e^{\lambda_\Delta|\omega(a, a')|} \]
over all $a, a' \in \domain_\alpha$.

For consistency with Condition~\eqref{cond:eps-lambda}, we choose $\lambda_\Delta > A$.
\item Condition~\eqref{cond:reg-p} {\em on $\hardpart^\alpha$ is satisfied.}

This is true because $p$ is a polynomial.
\item Condition~\eqref{cond:eps-lambda} {\em on $\softpart^\alpha$ is satisfied.} 

First, observe that
\begin{align*}
|\softker^\alpha(a, a')| & = \left|\frac{k_R(a, a')}{p(a)}\right| \\
& \le \frac{1}{|p(a)|} \sum_{j=0}^\infty \frac{|R_{j}|}{(j+1)!} \, |\zeta(a)-\zeta(a')|^{j+1} \\
& = \frac{|\zeta(a)-\zeta(a')|}{|p(a)|} \sum_{j=0}^\infty \frac{|R_{j}|}{(j+1)!} \, |\zeta(a)-\zeta(a')|^j.
\end{align*}
Now, choose $\lambda_\Delta > A$. Since $R(z^{-1})$ is holomorphic on the disk $|z| > A$, we have $|R_j| \lesssim \lambda_\Delta^j$ over all and $j \in \{0, 1, 2, \ldots\}$, as mentioned earlier. It follows that
\begin{align*}
\sum_{j=0}^\infty \frac{|R_{j}|}{(j+1)!} \, |\zeta(a)-\zeta(a')|^j
& \lesssim \sum_{j=0}^\infty \left(\frac{1}{j+1}\right)\frac{\lambda_\Delta^j}{j!} \, |\zeta(a)-\zeta(a')|^j \\
& \lesssim e^{\lambda_\Delta|\zeta(a)-\zeta(a')|}
\end{align*}
over all $a, a' \in \domain_\alpha^2$, so
\[ |\softker^\alpha(a, a')| \lesssim \frac{|\zeta(a)-\zeta(a')|}{|p(a)|} e^{\lambda_\Delta|\zeta(a)-\zeta(a')|} \]
over all $a, a' \in \domain_\alpha^2$. Like before, rewrite $p$ in the form $(\zeta - \alpha)\,P_\alpha(\zeta)$, so we have
\[ |\softker^\alpha(a, a')| \lesssim \frac{|\zeta(a)-\zeta(a')|}{|\zeta(a) - \alpha|\,\big|P_\alpha\big(\zeta(a)\big)\big|} e^{\lambda_\Delta|\zeta(a)-\zeta(a')|} \]
over all $a, a' \in \domain_\alpha$. Since $P_\alpha$ is a polynomial with a finite number of roots, and $\domain_\alpha$ does not touch any of the roots, $|P_\alpha(\zeta)|^{-1}$ is bounded on $\domain_\alpha$. We conclude that
\[ |\softker^\alpha(a, a')| \lesssim \frac{|\zeta(a)-\zeta(a')|}{|\zeta(a) - \alpha|} e^{\lambda_\Delta|\zeta(a)-\zeta(a')|} \]
over all $a, a' \in \domain_\alpha$, so Condition~\eqref{cond:eps-lambda} is satisfied with $\gamma = 1$.
\end{itemize}
We can now apply Proposition~\ref{prop:alt-general_volterra}, yielding the desired result.
\end{proof}

\bibliographystyle{siamplain}
\bibliography{reg-sing-volterra.bib}
\end{document}